\theoremstyle{plain}
\newtheorem{theorem}{Theorem}[section]
\newtheorem{proposition}[theorem]{Proposition}
\newtheorem{corollary}[theorem]{Corollary}
\newtheorem{def-thm}[theorem]{Definition-Theorem}
\newtheorem{lemma}[theorem]{Lemma}
\newtheorem{definition}[theorem]{Definition}
\newtheorem*{maintheorem}{Main Theorem}
\theoremstyle{definition}
\newtheorem{remark}[theorem]{Remark}
\newtheorem{example}[theorem]{Example}
\newcommand{\sq}[1]{\ifx#1([\else\ifx#1)]%
  \else\message{invalid use of "sq"}\fi\fi}
\DeclareMathOperator{\Supp}{Supp}
\DeclareMathSymbol{\idot}{\mathbin}{operators}{`\.}
\begin{document}
\title{{The Ru-Vojta result  for subvarieties }}
\author{Min Ru}
\address{
 Department  of Mathematics\newline
\indent University of Houston\newline
\indent Houston,  TX 77204, U.S.A.} 
\email{minru@math.uh.edu}
\author{Julie Tzu-Yueh Wang}
\address{Institute of Mathematics, Academia Sinica \newline
\indent No.\ 1, Sec.\ 4, Roosevelt Road\newline
\indent Taipei 10617, Taiwan}
\email{jwang@math.sinica.edu.tw}
\begin{abstract}  In \cite{RV19}, Min Ru and Paul  Vojta, among other things,  proved the so-called  general theorem (arithmetic part) which can be viewed as an extension of Schmidt's subspace theorem. 
In this note, we extend their result by replacing the divisors by  closed subschemes.  
%which are  of locally complete intersection.
 \end{abstract}
 \thanks{2010\ {\it Mathematics Subject Classification.}
11J97, 11J87, 14G05.}  
\thanks{The first named author is supported in part by the Simons Foundation
  grant award \#531604. The second named author was supported in part by Taiwan's MoST grant 108-2115-M-001-001-MY2.}

\baselineskip=16truept \maketitle \pagestyle{myheadings}
\markboth{}{A subspace theorem  for subvarieties }

%%%%%%%%%%%%%%%%%%%%% ENDTOP MATTER %%%%%%%%%%%%%%%%%%
\section{Introduction and Statements}
Let $X$ be a  complete variety. 
Let $\mathscr L$ be a big line sheaf and let $D$ be a nonzero effective
Cartier divisor on  $X$.  We define
\begin{equation*}
  \beta(\mathscr L, D)
    =  \liminf_{N\to\infty}
      \frac {\sum_{m\ge 1}h^0(\mathscr L^N(-mD))} {Nh^0(\mathscr L^N)}\;.
\end{equation*}
%{\color{green}(Note that $|\mathscr L^N|$ does not have to be base point free.)}
In \cite{RV19}, Min Ru and  Paul Vojta proved the following theorem which can be viewed as an extension of the Schmidt's subspace theorem. 

\begin{theorem} [\cite{RV19}, General theorem (Arithmetic Part)] \label{Gaa}
Let $X$ be a projective variety over a number field $k$, and
let $D_1, \dots, D_q$ be nonzero effective Cartier divisors
intersecting properly on $X$.  Let $\mathscr L$ be a big line sheaf on $X$.
Let $S\subset M_k$ be a finite set of places. Then, for every $\epsilon>0$, 
the inequality
\begin{equation*} 
  \sum_{i=1}^q \beta(\mathscr L,D_i)  m_{ D_i,S}( \frak p) 
    \leq (1+\epsilon) h_{\mathscr L}(\frak p)
\end{equation*}
holds for all $k$-rational points   $\frak p$  outside a proper Zariski-closed
subset  of $X$.
\end{theorem}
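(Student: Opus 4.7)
\textbf{Proof plan for Theorem \ref{Gaa}.}

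The plan is to reduce the inequality to Schmidt's subspace theorem via a filtration argument on the space of global sections of $\mathscr L^N$ for large $N$, in the spirit of Corvaja–Zannier, Evertse–Ferretti, and Autissier. First, I would fix $\epsilon>0$ and choose $N$ large enough so that for every $i$ the ratio $\sum_{m\ge 1}h^0(\mathscr L^N(-mD_i))/(Nh^0(\mathscr L^N))$ is within $\epsilon/q$ of $\beta(\mathscr L,D_i)$. Set $V=H^0(X,\mathscr L^N)$, $\ell=\dim V$, and consider for each $i$ the decreasing filtration $V\supset V(-D_i)\supset V(-2D_i)\supset\cdots$, where $V(-mD_i)=H^0(X,\mathscr L^N(-mD_i))$.

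The next step is to produce, at each place $v\in S$, a basis of $V$ that simultaneously witnesses the filtrations by all $D_i$ in a useful way. Order the divisors at $v$ so that $m_{D_{j_1(v)},v}(\mathfrak p)\ge\cdots\ge m_{D_{j_q(v)},v}(\mathfrak p)$. Using the proper intersection hypothesis, one builds a basis $\mathcal B_v=\{\phi_1^{(v)},\dots,\phi_\ell^{(v)}\}$ compatible with the nested chain coming from $D_{j_1(v)}$, then from $D_{j_1(v)}$ together with $D_{j_2(v)}$, and so on; this is the core technical construction adapted from Autissier's lemma. The key property is that for such a basis one obtains an estimate
\begin{equation*}
\sum_{k=1}^\ell \lambda_{D_{j},v}(\mathfrak p)\cdot \text{(order of vanishing of }\phi_k^{(v)}\text{ along }D_j\text{)}
\;\ge\; \sum_{i=1}^q N\bigl(h^0(\mathscr L^N)\beta(\mathscr L,D_i)-\epsilon\bigr)\lambda_{D_i,v}(\mathfrak p)
\end{equation*}
after summing over $v$, where $\lambda_{D_i,v}$ are local Weil functions. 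Consequently the local product $\prod_{k}\|\phi_k^{(v)}(\mathfrak p)\|_v$ translates into a lower bound for $\sum_i\beta(\mathscr L,D_i)m_{D_i,S}(\mathfrak p)$.

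Then I would view $V$ via the basis $\mathcal B_v$ as giving an embedding $X\to\mathbb{P}^{\ell-1}$ (independent of $v$ up to a linear change of coordinates encoded by the transition matrices between $\mathcal B_v$ and a fixed reference basis), and apply Schmidt's subspace theorem to the points $\phi(\mathfrak p)\in\mathbb{P}^{\ell-1}(k)$ and the hyperplanes corresponding to the coordinate functionals associated to each $\mathcal B_v$. The upper bound coming from Schmidt is $(\ell+\epsilon')h_{\mathscr L^N}(\mathfrak p)=N(\ell+\epsilon')h_{\mathscr L}(\mathfrak p)$ outside a Zariski-closed subset; dividing through by $Nh^0(\mathscr L^N)$ and combining with the lower bound above yields the stated inequality, with $\epsilon$-error absorbed by our choice of $N$.

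The main obstacle is the step producing a basis adapted \emph{simultaneously} to all filtrations by $D_i$ under the weaker hypothesis of proper intersection (as opposed to general position). This is where the hypothesis on the $D_i$ genuinely enters; it requires showing that the graded pieces $V(-mD_i)/V(-(m+1)D_i)$ admit a common refinement whose dimensions grow at the rate governed by $\beta(\mathscr L,D_i)$. All other steps are fairly routine once this combinatorial/filtration lemma is in hand, and extending to closed subschemes (the aim of the paper) will presumably require a closed-subscheme analogue of the same lemma.
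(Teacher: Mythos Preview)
Your overall architecture---filter $H^0(X,\mathscr L^N)$, extract a well-chosen basis at each place, feed into Schmidt---is exactly the shape of the Ru--Vojta argument (and of this paper's proof of the Main Theorem, which specializes to Theorem~\ref{Gaa}). But the step you yourself flag as the ``main obstacle'' is not a wrinkle to be smoothed; as you describe it, it does not work, and the actual proof \emph{circumvents} rather than solves it.

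The difficulty is that one cannot in general find a single basis adapted to more than two filtrations; Lemma~\ref{lemm_filt} handles two and that is sharp. Your ``nested chain coming from $D_{j_1(v)}$, then from $D_{j_1(v)}$ together with $D_{j_2(v)}$, and so on'' does not specify a mechanism, and there is no iterated-refinement lemma to cite. What Ru--Vojta (following Autissier) actually do is \emph{replace} the individual filtrations $V(-mD_i)$ by a single weighted one: for each $\sigma\in\Sigma$ and weight vector $\mathbf a$ with $\sum_{i\in\sigma}\beta_ia_i=b$, set $\mathscr I_{\mathbf a}(x)=\sum_{\sum a_ib_i\ge bx}\mathscr O(-\sum b_iD_i)$ and $\mathcal F(\sigma;\mathbf a)_x=H^0(X,\mathscr L^N\otimes\mathscr I_{\mathbf a}(x))$. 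A basis adapted to this \emph{one} filtration is all that is needed. The proper-intersection hypothesis enters as a regular-sequence condition making $\mathbf t\mapsto F(\mathbf t)=h^0(\mathscr L^N)^{-1}\int_0^\infty\dim\mathcal F(\mathbf t)_x\,dx$ concave (Proposition~\ref{aut2_thm_3_6}); concavity on the simplex $\sum\beta_it_i=1$, evaluated against the vertices $\beta_i^{-1}\mathbf e_i$, yields the lower bound $\sum_{s\in\mathcal B}\mu_{\mathbf a}(s)\ge b\min_i\beta_i^{-1}\sum_m h^0(\mathscr L^N(-mD_i))$ that you need. At each $(v,\mathfrak p)$ one then chooses $\mathbf a$ close to the normalized vector of local values $\lambda_{D_i,v}(\mathfrak p)/\sum_j\beta_j\lambda_{D_j,v}(\mathfrak p)$, so that $\sum b_i\lambda_{D_i,v}(\mathfrak p)$ is bounded below by $\mu_{\mathbf a}(s)$ times $\sum_j\beta_j\lambda_{D_j,v}(\mathfrak p)$; summing over the basis and invoking Theorem~\ref{schmidt_base} finishes. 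In short, the missing ingredients are Autissier's device of collapsing several filtrations into one weighted filtration, and the concavity lemma that extracts the $\beta$-constants from it; without these, your ``simultaneously adapted basis'' step has no content.
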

 Here, $m_{ D,S}( \frak p):=\sum_{v\in S} \lambda_{D,v}( \frak p)$, where  $\lambda_{D,v}$ is a Weil function with respect to the divisor $D$ and the place $v$.
 
The purpose of this note is to extend the above theorem to  closed subschemes.
We first introduce some notations. 
\begin{definition}\label{def_aut_lambda}
Let $\mathscr L$ be a big line sheaf and let $Y$ be a closed subscheme
 on a complete variety $X$.  We define
\begin{equation*}
  \beta(\mathscr L, Y)
    = \liminf_{N\to\infty}
      \frac {\sum_{m\ge 1}h^0(\mathscr L^N\otimes {\mathscr I}_{Y}^{m})} {Nh^0(\mathscr L^N)},
\end{equation*}
where ${\mathscr I}_{Y}$ is the ideal sheaf defining $Y$. 
% {\color{green}(Note that $|\mathscr L^N|$ does not have to be base point free.)}
\end{definition}
\begin{remark}\label{remark}
Let $\pi:\tilde X\to X$ be the blow up along $Y$ and $E$ be the exceptional divisor.
If $X$ is smooth  (or just Cohen Macaulay) and $Y$ is a locally complete intersection, then 
$$
h^0(\mathscr L^N\otimes {\mathscr I}_{Y}^{m})=h^0( \pi^*\mathscr L^N(-mE))
$$
for all $m\ge 1$.
See \cite[Remark 4.3.17]{lazpagI}.  Therefore,  in this case,  we have
\begin{equation*}
  \beta(\mathscr L, Y)=\beta(\mathscr L, E)
    := \liminf_{N\to\infty}
      \frac {\sum_{m\ge 1}h^0( \pi^*\mathscr L^N(-mE))} {Nh^0(\mathscr L^N)}\;.
\end{equation*}
\end{remark}
\setcounter{equation}{0}
The main result of our theorem is stated as follows.
\begin{maintheorem} \label{Ga}
Let $X$ be a projective variety over a number field $k$, and
let $Y_1, \dots, Y_q$ be 
closed  subschems of $X$ over $k$
intersecting properly on $X$.  Let $\mathscr L$ be a big line sheaf on $X$.
Let $S\subset M_k$ be a finite set of places. Then, for every $\epsilon>0$, 
the inequality
\begin{equation} \label{Ga_ineq}
  \sum_{i=1}^q \beta(\mathscr L,Y_i) m_{Y_i,S }( \frak p )
    \leq (1+\epsilon) h_{\mathscr L}(\frak p)
\end{equation}
holds for all $k$-rational points $\frak p$ outside a proper Zariski-closed
subset  of $X$.
\end{maintheorem}
%Here a closed subscheme $Y$ of $X$ of codimension $m$  is said to be of  locally complete intersection if for any $\frak p\in Y$, 
%there are functions $\phi_1, \hdots\phi_m$ in the local ring ${\mathcal O}_{\frak p, X }$ which define $Y$. 
 Here, $m_{ Y,S}( \frak p):=\sum_{v\in S} \lambda_{Y,v}( \frak p)$, where  $\lambda_{Y,v}$ is a Weil function with respect to the closed subscheme $Y$ and the place $v$. 
 Closed subschems $Y_1, \dots, Y_q$ 
%which are of  locally complete intersection  
are said to be {\it intersecting properly on $X$} if  
for any  subset $I\subset \{1, \dots, q\}$   such that $\cap_{i\in I}   {\rm Supp } Y_i \ne\emptyset $, the sequence 
$\{\phi_{i,1}\hdots\phi_{i, \epsilon_i}, i\in I\}$ is a regular sequence  in the local ring ${\mathcal O}_{\frak p, X }$ for $\frak p\in\cap_{i\in I} {\rm Supp }Y_i$,
where, for each $1\leq i\leq q$,  $\phi_{i,1}\hdots\phi_{i,\epsilon_i}$  are the local functions in  ${\mathcal O}_{\frak p, X }$ defining $Y_i$.  Therefore, $\epsilon_i\ge  {\rm codim} ~Y_i $, and $\sum_{i\in I} \epsilon_i\le \dim X $. 
 
\begin{remark}\label{remark2}
In the same context, we say that $Y_1, \dots, Y_q$  are {\it in general position}   if ${\rm codim}\left(\cap_{i\in I}Y_i\right)\ge \sum_{i\in I}{\rm codim}~Y_i$ for any $I\subset \{1,\hdots,q\}$.  Here we use the convention that $\dim\emptyset=-\infty$.  
If $Y_1, \dots, Y_q$ are of locally complete intersection closed subschemes intersecting properly on $X$, then they are    in general position.  The converse holds if $X$ is Cohen-Macaulay by \cite[Theorem 17.4]{Matsumura}.  
\end{remark}
We note that our   main theorem  extends Theorem \ref{Gaa} and its 
 counter-part  in Nevanlinna theory can also be obtained.  For details  of such correspondence, see \cite{RV19} or \cite{rw}.

We now introduce some background on the study of Schmidt's subspace theorem for closed subschemes.  In \cite{rw}, the authors observed a connection between the work of Mckinnon and Roth in \cite{McRo} and the arithmetic general theorem in \cite{RV19}, as well as provided a simpler proof of the result of Mckinnon-Roth (see \cite{McRo}).   
The following is the statement.
\begin{theorem}[\cite{rw}]\label{re}  Let $X$ be a projective variety defined over $k$ and 
$Y_1,\cdots,Y_q$  be closed subschemes  of $X$ defined over $k$ such that at most $\ell$ of the closed subschemes meet at any point of $X$.    Let $A$ be a big Cartier divisor on $X$ and let
\begin{align}\label{betaAY}
\beta_{A,Y_i}=\lim_{N\to\infty}\frac{\sum_{m=1}^{\infty}h^0(\tilde X_i, N\pi_i^*A-mE_i)}{Nh^0(X,NA)}, \quad1\le i\le q,
\end{align}
where $\pi_i:\tilde X_i\to X$ is the blowing-up of $X$ along $Y_i$, with the associated exceptional divisor $E_i$.
Then, for any $\epsilon>0$,
$$
 \sum_{i=1}^q m_{Y_i,S }( \frak p )\le \ell (\max_{1\le i\le q} \{\beta_{A,Y_i}^{-1}\}+\epsilon) h_{A}(\frak p)
$$
holds for  all $\frak p$ outside a proper Zariski-closed subset $Z$ of  $X(k)$.
\end{theorem}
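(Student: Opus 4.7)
The plan is to reduce Theorem~\ref{re} to finitely many applications of Theorem~\ref{Gaa} applied to a single subscheme at a time, then to combine these single-subscheme bounds via a combinatorial partition argument that exploits the ``at most $\ell$ meet at a point'' hypothesis.

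\emph{Step 1 (single-subscheme bound via blowup).} For each $i = 1, \ldots, q$, let $\pi_i:\tilde X_i \to X$ be the blowup along $Y_i$ with exceptional Cartier divisor $E_i$. By the definition of $\beta_{A, Y_i}$ in \eqref{betaAY}, we have $\beta_{A, Y_i} = \beta(\pi_i^{*}\mathscr L, E_i)$ where $\mathscr L = \OO_X(A)$. Applying Theorem~\ref{Gaa} on $\tilde X_i$ to the single big line sheaf $\pi_i^{*}\mathscr L$ and the single effective Cartier divisor $E_i$, and using the functorial identities $m_{Y_i, S}(\mathfrak p) = m_{E_i, S}(\pi_i^{-1}\mathfrak p) + O(1)$ and $h_A(\mathfrak p) = h_{\pi_i^{*}A}(\pi_i^{-1}\mathfrak p) + O(1)$, we obtain
$$
  m_{Y_i, S}(\mathfrak p) \le \bigl(\beta_{A, Y_i}^{-1} + \epsilon'\bigr) h_A(\mathfrak p)
$$
for all $\mathfrak p \in X(k)$ outside a proper Zariski-closed subset $Z_i \subset X$. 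In particular, $m_{Y_i, S}(\mathfrak p) \le (\max_j \beta_{A, Y_j}^{-1} + \epsilon') h_A(\mathfrak p)$ on $X(k)\setminus Z_i$.

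\emph{Step 2 (truncation via the $\ell$-hypothesis).} Since at most $\ell$ of the $Y_i$'s meet at any point of $X$, every intersection $\bigcap_{i \in I}Y_i$ with $|I|\ge\ell+1$ is empty. By the functorial property of Weil functions for the empty subscheme, $\min_{i \in I}\lambda_{Y_i, v}(\mathfrak p)$ is bounded uniformly in $\mathfrak p$, for each such $I$ and each $v$. Consequently, ordering the Weil functions at each place $v$ as $\lambda_{Y_{i_1^v}, v}(\mathfrak p)\ge\cdots\ge\lambda_{Y_{i_q^v}, v}(\mathfrak p)$, the tail beyond position $\ell$ is bounded by a constant, giving
$$
  \sum_{i=1}^q\lambda_{Y_i, v}(\mathfrak p) \le \sum_{j=1}^{\ell}\lambda_{Y_{i_j^v}, v}(\mathfrak p) + O(1).
$$

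\emph{Step 3 (partition and combination).} Let $I_v(\mathfrak p)\subset\{1,\ldots,q\}$ be the set of $\ell$ indices realizing the top $\ell$ values at $v$. There are only $\binom{q}{\ell}^{|S|}$ possible assignments $v\mapsto I_v$, so we partition $X(k)\setminus\bigcup_iZ_i$ into finitely many pieces on each of which this assignment is constant. On a fixed piece, summing Step~2 over $v \in S$ gives
$$
  \sum_{i=1}^q m_{Y_i, S}(\mathfrak p) \le \sum_{v \in S}\sum_{i \in I_v}\lambda_{Y_i, v}(\mathfrak p) + O(1),
$$
so it suffices to bound the right-hand side by $\ell(\max_j\beta_{A, Y_j}^{-1}+\epsilon)h_A(\mathfrak p)$. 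The double sum has only $\ell|S|$ nonzero terms (instead of $q|S|$), and this is where the saving from $q$ to $\ell$ must come from. One distributes this bound across the single-subscheme estimates of Step~1, combined either with a place-dependent variant of Theorem~\ref{Gaa} applied to the configurations $\{E_i : i \in I_v\}_{v \in S}$ on the simultaneous blowup of all the $Y_i$'s (where any $\ell+1$ of the $E_i$ still have empty intersection), or with a Schmidt-type telescoping argument specific to \cite{rw}.

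\emph{Main obstacle.} The crux is the bookkeeping in Step~3. The naive route rewrites $\sum_{v \in S}\sum_{i \in I_v}\lambda_{Y_i, v} = \sum_{i=1}^q m_{Y_i, T_i}$ with $T_i=\{v : i \in I_v\}$ and bounds each $m_{Y_i, T_i}\le m_{Y_i, S}\le(\beta_{A, Y_i}^{-1}+\epsilon')h_A$ via Step~1, yielding only $\sum_i m_{Y_i, S}\le q(\max_j\beta_{A, Y_j}^{-1}+\epsilon')h_A$—off by the factor $q/\ell$. Producing the sharper coefficient $\ell\max_j\beta_{A, Y_j}^{-1}$ requires extracting the place-wise combinatorial saving (that only $\ell$ subschemes ``see'' $\mathfrak p$ at each $v$) directly from a single application of Ru--Vojta to an auxiliary divisor configuration on the blowup, rather than summing $q$ independent single-subscheme bounds.
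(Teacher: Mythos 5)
Your Steps 1 and 2 are sound observations, and you correctly diagnose the obstacle in Step 3, but the proposal is incomplete precisely at the point you flag: you never produce the argument that converts the place-wise saving (only $\ell$ indices active per $v$) into the coefficient $\ell$ rather than $q$. Applying Theorem~\ref{Gaa} once per subscheme and summing cannot give the result, as you note. The missing mechanism is not a ``place-dependent variant of Theorem~\ref{Gaa}'' applied to an auxiliary divisor configuration; it is a direct single application of the underlying Schmidt-type base theorem (Theorem~\ref{schmidt_base}) to a carefully chosen collection of sections, and the factor $\ell$ emerges from linear algebra, not from the geometry of intersections of the $E_i$.

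Concretely, the proof in \cite{rw} runs as follows. For each $i$ one filters $V_N = H^0(X, NA)$ by order of vanishing along $E_i$ on the blowup $\tilde X_i$ and chooses a basis $\mathcal B_i$ of $V_N$ adapted to that filtration; by the definition of $\beta_{A,Y_i}$ the total vanishing order $\sum_{s\in\mathcal B_i}\mu_i(s)$ is roughly $N \beta_{A,Y_i}\, h^0(NA)$. Using the Weil-function property $\lambda_{(s),v}\ge \mu_i(s)\lambda_{Y_i,v}+O_v(1)$ one gets a lower bound on $\sum_{s\in\mathcal B_i}\lambda_{(s),v}(\mathfrak p)$ in terms of $\lambda_{Y_i,v}(\mathfrak p)$. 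Now take, for each $v\in S$ and $\mathfrak p$, the index set $I_v$ of at most $\ell$ dominant subschemes, and estimate
$$
\sum_{i\in I_v}\sum_{s\in\mathcal B_i}\lambda_{(s),v}(\mathfrak p)
\ \le\ \ell\,\max_{J}\sum_{j\in J}\lambda_{s_j,v}(\mathfrak p),
$$
where the maximum is taken over all linearly independent subsets $J$ of the combined list of sections. This inequality is the crux and it is purely combinatorial: each $\mathcal B_i$ is itself a linearly independent set of full size $\dim V_N$, and $|I_v|\le\ell$, so the left side is a sum over at most $\ell$ such sets, each of which is one admissible $J$. Summing over $v$ and invoking Theorem~\ref{schmidt_base} exactly once then gives the upper bound $\ell\,(\dim V_N+\epsilon)\,h_{NA}(\mathfrak p)$, and after dividing by $N\,h^0(NA)$ and letting $N\to\infty$ the coefficient $\ell\,\max_j\beta_{A,Y_j}^{-1}$ appears. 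So the route is a single application of Schmidt's theorem to the union of adapted bases, not $q$ separate applications of Theorem~\ref{Gaa}; without this, Step 3 as you wrote it cannot close.
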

When the closed subschemes  $Y_i=y_i$ are distinct points in $X$,  one may take $\ell=1$ in Theorem \ref{re}, ane the connection between Theorem \ref{re} and \cite{McRo} is due to the following inequality shown by Mckinnon and Roth: 
$$\beta_{A,y_i}\ge \frac{n}{n+1}\epsilon_{y_i}(A),$$
where $\epsilon_{y_i}(A)$ is the Seshadri constant defined below. 

\begin{definition}
Let $Y$ be a closed subscheme of a projective variety $X$ and let $\pi:\tilde X\to X$ be the blowing-up of $X$ along $Y$ with the exceptional divisor $E$.  Let $A$ be a nef Cartier divisor on $X$.  The 
{\it Seshadri constant} $\epsilon_{Y}(A)$ of $Y$ with respect to $A$ is the real number
$$
 \epsilon_{Y}(A)=\sup\{\gamma\in\mathbb Q_{\ge 0}\, |\, \pi^*A-\gamma E\text{ is }\mathbb Q{\rm -nef}\}. 
$$
We note that when $Y$ is a Cartier divisor on $X$, then $\epsilon_{Y}(A)$ can be defined similarly without blowing up.
\end{definition}

Recently, G. Heier and A. Levin also  generalized the result in \cite{McRo} from points to close subschmes.  
\begin{theorem}[\cite{HL}]\label{HLtheorem}  
Let $X$ be a projective variety over a number field $k$, and
let $Y_1, \dots, Y_q$ be closed  subschems of $X$ defined over $k$, and in general position.
 Let $A$ be an ample Cartier divisor on $X$.
Let $S\subset M_k$ be a finite set of places. Then, for every $\epsilon>0$, 
the inequality
\begin{equation} \label{Ga_ineq}
  \sum_{i=1}^q \epsilon_{Y_i}(A) m_{Y_i,S }( \frak p )
    \leq (n+1+\epsilon) h_{A}(  \frak p )
\end{equation}
holds for all  $\frak p$  outside a proper Zariski-closed
subset $Z$ of $X(k)$.
\end{theorem}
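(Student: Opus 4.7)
The strategy is to deduce Theorem \ref{HLtheorem} from the Main Theorem by proving, for each closed subscheme $Y$, the key inequality
\[
  \beta(A, Y) \;\ge\; \frac{\epsilon_Y(A)}{n+1}.
\]
Inserting this into the Main Theorem applied with $\mathscr L = \OO(A)$, and replacing the auxiliary $\epsilon$ there by $\epsilon/(n+1)$, gives at once
\[
  \sum_{i=1}^q \epsilon_{Y_i}(A)\, m_{Y_i,S}(\frak p)
  \;\le\; (n+1)\sum_{i=1}^q \beta(A,Y_i)\, m_{Y_i,S}(\frak p)
  \;\le\; (n+1+\epsilon)\, h_A(\frak p),
\]
outside a proper Zariski closed subset of $X(k)$.

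For the key inequality I would pass to the blow-up $\pi\colon \tilde X \to X$ along $Y$ with exceptional divisor $E$ (assuming $X$ smooth and $Y$ locally complete intersection, so that Remark \ref{remark} identifies $\beta(A, Y)$ with $\beta(A, E)$). The standard asymptotic $h^0(N\pi^*A - mE) = \tfrac{N^n}{n!}\vol(\pi^*A - (m/N)E) + o(N^n)$, together with the interpretation of $\sum_m$ as a Riemann sum in $t = m/N$, yields the identity
\[
  \beta(A, Y) \;=\; \frac{1}{A^n}\int_0^\infty \vol\bigl(\pi^*A - tE\bigr)\, dt.
\]

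The heart of the argument is a lower bound for this integral via the concavity of $L \mapsto \vol(L)^{1/n}$ on the pseudo-effective cone of $\tilde X$. Set $\gamma := \epsilon_Y(A)$. By definition of the Seshadri constant, $\pi^*A - \gamma E$ is nef, so in particular $\vol(\pi^*A - \gamma E) \ge 0$. Writing $\pi^*A - tE = (1 - t/\gamma)\,\pi^*A + (t/\gamma)\,(\pi^*A - \gamma E)$ for $t \in [0,\gamma]$ and applying concavity gives
\[
  \vol(\pi^*A - tE)^{1/n} \;\ge\; (1 - t/\gamma)\,(A^n)^{1/n},
\]
so that
\[
  \int_0^\infty \vol(\pi^*A - tE)\, dt
  \;\ge\; A^n \int_0^\gamma (1 - t/\gamma)^n\, dt
  \;=\; \frac{\gamma\, A^n}{n+1}.
\]
Dividing by $A^n$ yields $\beta(A, Y) \ge \gamma/(n+1) = \epsilon_Y(A)/(n+1)$, as required.

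The main obstacle is the hypothesis gap: the Main Theorem asks that the $Y_i$ intersect properly, while Theorem \ref{HLtheorem} assumes only general position. On a Cohen--Macaulay $X$ these notions coincide for locally complete intersection subschemes (Remark \ref{remark2}), but in general one must first reduce to that setting, for instance by passing to a resolution of singularities, replacing each $Y_i$ by its scheme-theoretic pullback, and verifying that the Seshadri constants behave compatibly under pullback. A secondary technicality is the passage from the defining $\liminf$ of $\beta(A,Y)$ to the integral identity above, which requires uniform $h^0$-estimates near the boundary of the big cone; these are standard but must be invoked with care so that $\liminf$ may be replaced by $\lim$ and the Riemann sum by its integral.
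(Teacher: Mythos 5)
Your proof follows the same overall route as the paper: reduce Theorem~\ref{HLtheorem} to the Main Theorem (taking $\mathscr L=\mathscr O(A)$) via the comparison between $\beta$ and the Seshadri constant, mediated by Remarks~\ref{remark} and~\ref{remark2}. The only difference is that the paper simply cites \cite[Theorem 4.2]{HL} for the (sharper) inequality \eqref{compare}, $\beta_{A,Y}\ge\frac{r}{n+1}\epsilon_Y(A)$, whereas you give an independent derivation of the weaker but sufficient case $r=1$ using the integral formula $\beta(\mathscr O(A),Y)=\frac{1}{A^n}\int_0^\infty\vol(\pi^*A-tE)\,dt$ and concavity of $\vol^{1/n}$ on the big cone; this is correct, and is essentially how Heier--Levin themselves prove their Theorem~4.2 (retaining the $\vol(\pi^*A-\gamma E)^{1/n}$ term in the concavity step would recover the $r/(n+1)$ bound). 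You also correctly flag the point the paper itself makes: the deduction from the Main Theorem requires $X$ smooth (or Cohen--Macaulay) and each $Y_i$ a locally complete intersection, so that general position coincides with proper intersection and Remark~\ref{remark} lets $\beta(\mathscr O(A),Y_i)$ be computed on the blow-up; the paper does not claim to reprove Heier--Levin in full generality, and neither does your argument.
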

 They also proved (\cite[Theorem 4.2]{HL})
\begin{align}\label{compare}
\beta_{A,Y}\ge \frac{r}{n+1} \epsilon_{Y}(A),
\end{align}
where $\beta_{A,Y}$ is as in \eqref{betaAY}.
Therefore, when $X$ is smooth and the $Y_i$ are of locally complete intersection,  our main theorem implies 
Theorem \ref{HLtheorem} by Remark \ref{remark} and \ref{remark2}.  Here, we let  $\mathscr L=\mathcal O(A)$.

Note that the statement of Theorem \ref{re} is weaker than the Main Theorem although  Theorem \ref{re} dealt with more general situation that 
``at most $\ell$ of the closed subschemes meet at any point of $X$".   Theorem \ref{HLtheorem}  is also special since it only deals with  
the Seshadri constant  $\epsilon_{Y_i}(A)$ rather than $\beta( \mathcal O(A),Y_i)$.  

The following is an example coming from \cite[Corollary 2]{cz_am}.  We use it  to compare these two constants.     
\begin{example}\label{example}
Let $L_1, L_2, L_3, L_4$ be four lines in general position  $\mathbb P^2$ and choose three points $P_1,P_2,P_3$, with $P_i\in L_i$ for $i=1,2,3$, $P_i\notin L_j$ if $j\ne i$.  Let $X\to\mathbb P^2$ be the blow-up of the three points $P_1, P_2,P_3$ and let $D_i$, $1\le i\le 4$ be the strict transform of $L_i$. Let $A=\ell D_1+\ell D_2+\ell D_3+D_4$, where $\ell$ is a  positive integer.  It is easy to compute that $A^2=6\ell^2+6\ell+1$, $(A.D_i)=2\ell+1$  for $1\le i\le 3$.  (See also  \cite[Proposition 2]{cz_am}.)

 It is explained in  the proof of \cite[Lemma 5.6]{rtw}  that
 $$\beta( \mathcal O(A),D_i)=\frac{ \frac23\xi_iA^2-\frac13(A.D_i)\xi_i^2 }{A^2}.$$
Here, $\xi_i$ is the constant such that $(A-\xi_iD_i)^2=0$ and $\xi_i=\frac{A^2}{2(A.D_i)}$  for $1\le i\le 3$. (See also  \cite[Proposition 2]{cz_am})
 These datas give
$$\beta( \mathcal O(A),D_i)=\frac{6\ell^2+6\ell+1}{8\ell+4}\ge \frac{3\ell}4.$$ 
On the other hand, it is clear that $A$ is ample and the Seshadri constant  $\epsilon_{D_i}(A)=\ell$ for $1\le i\le 3$.  
Then the right hand side of   \eqref{compare} is $\frac{\ell}{3}$ and left hand side is at least $\frac{3\ell}4$.
\end{example}  
The background materials will be given in the next section.  In Section \ref{filtration}, we will recall some definitions and results from  \cite{Aut2} and prove our main lemma that enables us to generalize the key proposition  in   \cite{Aut2} for  closed subschemes.  The proof of the Main Theorem will be given in the last section.
 \section{Preliminary }
\noindent{\bf Number field and valuations}. 
For a number field $k$, recall that $M_k$ denotes the set of places of $k$, and
that $k_\upsilon$ denotes the completion of $k$ at a place $\upsilon\in M_k$.
Norms $\|\cdot\|_\upsilon$ on $k$ are normalized so that
$$\|x\|_\upsilon = |\sigma(x)|^{[k_\upsilon:\mathbb R]} \qquad\text{or}\qquad
  \|p\|_\upsilon = p^{-[k_\upsilon:\mathbb Q_p]}$$
if $\upsilon\in M_k$ is an archimedean place corresponding to an embedding
$\sigma\colon k\hookrightarrow\mathbb C$ or a non-archimedean place lying
over a rational prime $p$, respectively.

An $M_k$-constant is a collection $(c_v)_{v\in M_k}$ of real constants such
that $c_v=0$ for all but finitely many $v$.
Heights are logarithmic and relative to the number field used as a base field,
which is always denoted by $k$.  For example, if $P$ is a point
on $\mathbb P^n_k$ with homogeneous coordinates $[x_0:\dots:x_n]$ in $k$, then
$$h(P) = h_{\mathscr O(1)}(P)
  = \sum_{\upsilon\in M_k} \log\max\{\|x_0\|_\upsilon,\dots,\|x_n\|_\upsilon\}
  \;.$$

\noindent{\bf Weil function for subschemes}. 
We first recall some basic properties of local Weil functions associated to closed subschemes from \cite[Section 3]{rw} or  \cite[Section 2]{Sil}.
We assume that the readers are familiar with the notion of  Weil functions associated to divisors. (See \cite[Chapter 10]{Lang},  or \cite[Section 1]{Sil}.)

 Let $Y$ be a closed subscheme on a projective variety $V$ defined over $k$. 
It uniquely corresponds to  the ideal sheave ${\mathscr I}_{Y}$. 
J. Silverman showed in \cite{Sil} that one can assign to $Y$,  
for each place $v\in M_k$,  a Weil function (or local height)
$$
\lambda_{Y,v}: V\setminus {\rm supp}(Y)\to \mathbb R
$$
satisfying some functorial properties (up to an $M_k$-constant) described 
in \cite[Theorem 2.1]{Sil}. 
We state some properties which we needed.

\begin{lemma}[\cite{{Sil}}, Theorem 2.1]\label{s}
The Weil function $\lambda$ satisfy the following condtions(up to  $M_k$-constants):

\noindent $($a$)$ If $D$ is an effective divisior, the $\lambda_D$ is is the usual Weil
function associated to the divisors.

\noindent $($b$)$ If $X, Y$ are two closed subschemes, then 
$\lambda_{X\cap Y} = \min\{\lambda_X, \lambda_Y\}$.

\noindent $($c$)$
If $X, Y$ are two closed subschemes, then 
$\lambda_{X+Y}=\lambda_X +\lambda_Y$. In particlar, we have 
$\lambda_{{\mathscr I}^m_{Y}}= m \lambda_{{\mathscr I}_{Y}}$ for any positive 
integer $m$. Here $\lambda_{{\mathscr I}_{Y}}: =\lambda_Y$.

\noindent $($d$)$
If $X, Y$ are two closed subschemes with ${\mathscr I}_{Y}\subset 
{\mathscr I}_{X}$, then  $\lambda_X\leq \lambda_Y$.
\end{lemma}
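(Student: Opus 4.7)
The plan is to follow Silverman's construction: build $\lambda_{Y,v}$ from a local presentation of the ideal sheaf $\mathscr{I}_Y$ and then verify properties (a)--(d) as essentially formal consequences of well-known algebraic identities for ideals, with the bulk of the work being well-definedness up to an $M_k$-constant.

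First, I would cover $V$ by finitely many affine opens $U_\alpha$ on which $\mathscr{I}_Y$ is generated by regular functions $f_{\alpha,1}, \ldots, f_{\alpha,r_\alpha}$. For $P \in U_\alpha(k_v) \setminus \operatorname{supp}(Y)$ I would tentatively define
$$\lambda_{Y,v}^{(\alpha)}(P) = -\log \max_i \|f_{\alpha,i}(P)\|_v.$$
Independence of the choice of generators on a fixed $U_\alpha$ follows because any two sets of generators for the same ideal are related by matrices with regular-function entries (restricted to the complement of $Y$), and the resulting logarithmic error is an $M_k$-constant. Transitioning between different charts $U_\alpha, U_\beta$ requires a glueing argument on a finite subcover, again yielding a well-defined $\lambda_{Y,v}$ up to an $M_k$-constant. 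This is the core of Silverman's existence theorem and is the main technical point.

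Having fixed such a $\lambda_{Y,v}$, property (a) is immediate: if $D$ is an effective Cartier divisor, then $\mathscr{I}_D$ is locally principal, generated by a single function $f_\alpha$, so $\lambda_{D,v} = -\log \|f_\alpha\|_v$ agrees with the classical divisorial Weil function. For (b), I would use the scheme-theoretic identity $\mathscr{I}_{X \cap Y} = \mathscr{I}_X + \mathscr{I}_Y$, so the union $\{f_i\} \cup \{g_j\}$ of local generators generates $\mathscr{I}_{X \cap Y}$, and
$$\lambda_{X \cap Y,v} = -\log \max\bigl\{\max_i \|f_i\|_v,\ \max_j \|g_j\|_v\bigr\} = \min\{\lambda_{X,v}, \lambda_{Y,v}\}.$$
For (c), I would use $\mathscr{I}_{X+Y} = \mathscr{I}_X \cdot \mathscr{I}_Y$, locally generated by the products $f_i g_j$, so
$$\lambda_{X+Y,v} = -\log \max_{i,j} \|f_i g_j\|_v = -\log \bigl(\max_i \|f_i\|_v \cdot \max_j \|g_j\|_v\bigr) = \lambda_{X,v} + \lambda_{Y,v},$$
and the second part of (c), $\lambda_{\mathscr{I}_Y^m} = m \lambda_Y$, follows by induction on $m$. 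For (d), whenever $\mathscr{I}_Y \subset \mathscr{I}_X$, each local generator $g_j$ of $\mathscr{I}_Y$ can be written as $g_j = \sum_i a_{ij} f_i$ with regular coefficients $a_{ij}$; this yields $\max_j \|g_j\|_v \leq C_v \max_i \|f_i\|_v$ for an $M_k$-constant $C_v$, hence $\lambda_{Y,v} \geq \lambda_{X,v}$ up to an $M_k$-constant, i.e., $\lambda_X \leq \lambda_Y$.

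The main obstacle, as flagged, is uniformity of constants across $v \in M_k$: to promote the above pointwise bounds to genuine equalities modulo $M_k$-constants, one must verify that only finitely many places contribute nonzero corrections. This is handled by fixing an integral model of $V$ over the ring of integers $\mathcal{O}_k$ and observing that the transition matrices between sets of generators (and their inverses away from $Y$) have entries whose denominators involve only finitely many primes. Once this uniformity is in hand, the verifications of (a)--(d) themselves reduce to the formal manipulations indicated above.
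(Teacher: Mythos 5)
The paper does not give a proof of this lemma at all: it is imported as a citation from Silverman's 1987 paper (\cite{Sil}, Theorem 2.1), and the paper immediately records the local presentation formula (the paper's Lemma 2.3, from Silverman's Proposition 2.4) as a separate cited fact. So there is no ``paper's own proof'' to compare against.

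As a stand-alone reconstruction of Silverman's argument, your sketch is correct and follows the standard approach. The local model $\lambda_{Y,v} = -\log\max_i\|f_{\alpha,i}\|_v = \min_i\{-\log\|f_{\alpha,i}\|_v\}$ matches the paper's Lemma 2.3; (a)--(d) then follow from the ideal-theoretic identities $\mathscr I_{X\cap Y}=\mathscr I_X+\mathscr I_Y$, $\mathscr I_{X+Y}=\mathscr I_X\cdot\mathscr I_Y$, and the matrix-coefficient argument for (d), exactly as you outline, and the multiplicativity $\max_{i,j}\|f_ig_j\|_v=(\max_i\|f_i\|_v)(\max_j\|g_j\|_v)$ that underlies (c) is valid. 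You also correctly flag the real technical content: uniformity of the $O_v(1)$ corrections over $v\in M_k$, which Silverman handles via $M_k$-bounded transition data and which you address via an integral model so that only finitely many primes of bad reduction contribute. One small caveat worth making explicit if you flesh this out: boundedness of the regular functions $a_{ij}$ (and of the transition matrices between charts) is not automatic on a non-compact affine $U_\alpha$; one uses completeness of $V$ so that $V(k_v)$ is compact (archimedean) or bounded (non-archimedean) and refines the cover accordingly, which is exactly where the $M_k$-constant framework earns its keep.
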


The Weil function can be expressed locally by the generators as follows.
\begin{lemma}[\cite{{Sil}}, Propsition 2.4] \label{representation}
Let $Y$ be a closed subscheme of $V$. Let $U\subset V$ be an affine open subset such that 
${\mathscr I}_{Y}|_U$ is generated by the sections $\phi_1, \dots, \phi_r\in \Gamma(U, {\mathscr I}_{Y})$. Then 
$$\lambda_{Y,v} = \min_{1\leq i\leq r} \left\{-\log\|\phi_i\|_{\upsilon}\right\} + \alpha_{\upsilon},$$
where $\alpha$ is an $M_k$-bounded function. 
\end{lemma}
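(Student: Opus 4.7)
The plan is to derive the local formula by decomposing the closed subscheme $Y\cap U$ into principal pieces and assembling its Weil function from those pieces via the functorial properties collected in Lemma \ref{s}. For $1\le i\le r$, write $Z_i\subset U$ for the closed subscheme defined by the principal ideal $(\phi_i)\subset\Gamma(U,\mathcal O_V)$. Since $\phi_1,\dots,\phi_r$ generate ${\mathscr I}_Y|_U$, we have ${\mathscr I}_Y|_U=(\phi_1)+\cdots+(\phi_r)$ as ideal sheaves on $U$, which translates into the scheme-theoretic equality $Y\cap U=Z_1\cap\cdots\cap Z_r$. This is the key geometric observation that lets us invoke Lemma \ref{s}(b).

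\textbf{Key steps.} First, apply Lemma \ref{s}(b) inductively to the chain of intersections above; this yields
$$
\lambda_{Y,v}\big|_U=\min_{1\le i\le r}\lambda_{Z_i,v}+\alpha^{(1)}_v,
$$
where $\alpha^{(1)}$ is an $M_k$-bounded function. Second, identify each $\lambda_{Z_i,v}$: since $V$ is a variety and $\phi_i\ne 0$, after restricting to a suitable affine cover we may assume $\phi_i$ is a non-zerodivisor in $\Gamma(U,\mathcal O_V)$, so $Z_i$ is an effective Cartier divisor on $U$ cut out locally by the single equation $\phi_i=0$. Lemma \ref{s}(a) identifies $\lambda_{Z_i,v}$ with the ordinary divisor Weil function of $Z_i$, and the standard local formula for a divisor Weil function in terms of a local defining equation (see \cite[Chapter 10]{Lang}) gives
$$
\lambda_{Z_i,v}=-\log\|\phi_i\|_v+\alpha^{(2)}_{i,v},
$$
with each $\alpha^{(2)}_i$ an $M_k$-bounded function. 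Combining the two reductions produces the required equality $\lambda_{Y,v}=\min_i\{-\log\|\phi_i\|_v\}+\alpha_v$ on $U$, with $\alpha$ the $M_k$-bounded error formed from $\alpha^{(1)}$ and the $\alpha^{(2)}_i$.

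\textbf{Main obstacle.} The sole delicate point is the principal case: one needs to know that the Weil function of the closed subscheme cut out on $U$ by a single regular function $\phi$ equals $-\log\|\phi\|_v$ up to an $M_k$-bounded term. When $\phi$ is a non-zerodivisor this reduces cleanly to the Cartier-divisor case via Lemma \ref{s}(a), but if $\phi$ is a zero-divisor (on a non-integral component) then $Z_i$ is not a Cartier divisor and Lemma \ref{s}(a) does not apply directly. In that event one must fall back on Silverman's intrinsic construction of Weil functions for closed subschemes: either realize $Z_i$ via the morphism $\phi_i\colon U\to\mathbb A^1$ and pull back the Weil function of the origin (using the functoriality in Silverman's Theorem 2.1 not listed in Lemma \ref{s}), or pass to the blowing-up of $Z_i$ and compare with the exceptional divisor via property (a). In both routes the quantity $-\log\|\phi_i\|_v$ emerges as the correct local representative. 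Once this identification of $\lambda_{Z_i,v}$ is in hand, Lemma \ref{s}(b) delivers the full formula in a routine inductive step.
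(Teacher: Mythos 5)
This lemma is quoted from Silverman \cite[Proposition 2.4]{Sil} and the paper does not supply a proof of its own, so there is no in-paper argument to compare against; taking your proposal on its own terms, the core step has a genuine gap. You form the subscheme $Z_i\subset U$ cut out by $(\phi_i)$ and then apply Lemma~\ref{s}(b) to $Y\cap U = Z_1\cap\cdots\cap Z_r$. But Silverman's Weil functions --- and therefore every item in Lemma~\ref{s} --- are attached to closed subschemes of the projective variety $V$, not of the affine open $U$, so the quantities $\lambda_{Z_i,v}$ you manipulate are simply not defined by the machinery you are citing. Replacing $Z_i$ by its scheme-theoretic closure $\overline{Z_i}\subseteq V$ does not repair this: $\overline{Z_1}\cap\cdots\cap\overline{Z_r}$ agrees with $Y$ only over $U$, and to conclude that the Weil functions of two subschemes agreeing over $U$ differ on $U$ by an $M_k$-bounded error is exactly the kind of local statement being proved, so the argument becomes circular. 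The zero-divisor issue you flag is real but secondary to this.

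The way around the difficulty, and the route Silverman actually takes, is to decompose $Y$ \emph{globally}. Since $V$ is projective, $Y$ is a scheme-theoretic intersection $Y = D_1\cap\cdots\cap D_s$ of effective Cartier divisors $D_j$ on $V$ (vanishing loci of global sections of a very ample line sheaf twisted by $\mathscr I_Y$). Lemma~\ref{s}(b) then legitimately gives $\lambda_{Y,v}=\min_j\lambda_{D_j,v}+O(1)$, and the classical local formula for divisor Weil functions (Lang, \cite[Chapter 10]{Lang}) gives $\lambda_{D_j,v}=-\log\|f_j\|_v+O(1)$ on $U$, where $f_j$ is a local equation for $D_j$ on $U$. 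Since $(f_1,\dots,f_s)$ and $(\phi_1,\dots,\phi_r)$ both generate $\mathscr I_Y|_U$, writing each system of generators as a regular-coefficient combination of the other yields the $M_k$-bounded comparison needed to pass from the $f_j$ to the $\phi_i$. The global decomposition into Cartier divisors and the subsequent change-of-generators estimate are the ingredients your proof is missing.
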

 \section{The construction of the filtration and the main lemma}\label{filtration}
 We first recall some definitions and results from  \cite{Aut2} and \cite{RV19}.  We will then show our main lemma that enables us to generalize the key proposition  in   \cite{Aut2} for  closed subschemes.

\begin{definition} Let ${\mathbb N}$ be the set of natural numbers.  A subset $N\subset \mathbb N^r$ is said to be
\textbf{saturated} if ${\bf a}+{\bf b}\in N$ for any 
${\bf a}\in  \mathbb N^r$ and ${\bf b} \in N$.
\end{definition}

\begin{lemma}[Lemma 3.2, \cite{Aut2}]\label{lemm_aut2_3_2}
Let $A$ be a local ring and $(\phi_1, \dots, \phi_r)$ be a regular sequence of 
$A$. Let $M$ and $N$ be two saturated subsets of $\mathbb N^r$.  Then
$${\mathcal I}(M)\cap {\mathcal I}(N)={\mathcal I}(M\cap N),$$
where, for $N\subset \mathbb N^r$, ${\mathcal I}(N)$ is the ideal of $A$ generated by 
$\{\phi_1^{b_1}\cdots \phi_r^{b_r}~|~{\bf b}\in N\}$. 
\end{lemma}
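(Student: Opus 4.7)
The plan is to establish the two inclusions separately. The direction ${\mathcal I}(M\cap N)\subseteq {\mathcal I}(M)\cap {\mathcal I}(N)$ is immediate from the generators, since every monomial $\phi^{\mathbf b}$ with $\mathbf b\in M\cap N$ lies in both ideals on the right. All of the content of the lemma is in the reverse inclusion ${\mathcal I}(M)\cap {\mathcal I}(N)\subseteq {\mathcal I}(M\cap N)$.

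The first step is to reduce to the principal case: for any $\mathbf a,\mathbf b\in\mathbb N^r$,
$$(\phi^{\mathbf a})\cap(\phi^{\mathbf b})=(\phi^{\mathbf a\vee\mathbf b}),$$
where $\vee$ denotes the coordinatewise maximum. This is the analogue, for a regular sequence, of the elementary monomial identity for distinct variables in a polynomial ring. I would prove it by invoking the standard consequence of the regular sequence hypothesis that $(\phi_1^{c_1},\dots,\phi_r^{c_r})$ remains a regular sequence for any $c_i\ge 0$. The nontrivial containment then reduces to an iterative cancellation: starting from $g\phi^{\mathbf a}=h\phi^{\mathbf b}$, one cancels the common factor $\phi^{\mathbf a\wedge\mathbf b}$ (which is a non-zero-divisor, being a product of non-zero-divisors), arriving at $g\phi^{\mathbf u}=h\phi^{\mathbf v}$ with $\mathbf u$ and $\mathbf v$ of disjoint support; coordinate-by-coordinate application of the non-zero-divisor property then forces $g$ to be divisible by $\phi^{\mathbf v}=\phi^{\mathbf a\vee\mathbf b-\mathbf a}$, so $g\phi^{\mathbf a}\in(\phi^{\mathbf a\vee\mathbf b})$ as required.

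Next I would pass from the principal case to general saturated $M,N$. By Dickson's lemma, every saturated subset of $\mathbb N^r$ has a finite set of minimal elements; thus ${\mathcal I}(M)=\sum_{\mathbf a\in\min M}(\phi^{\mathbf a})$ and similarly for $N$, while $M\cap N=\bigcup_{\mathbf a\in\min M,\mathbf b\in \min N}\bigl((\mathbf a\vee\mathbf b)+\mathbb N^r\bigr)$. The lemma is therefore reduced to the distributivity
$$\Bigl(\sum_{\mathbf a\in\min M}(\phi^{\mathbf a})\Bigr)\cap\Bigl(\sum_{\mathbf b\in\min N}(\phi^{\mathbf b})\Bigr)=\sum_{\mathbf a,\mathbf b}(\phi^{\mathbf a})\cap(\phi^{\mathbf b}).$$

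The hard part will be precisely this distributivity, which is not a formal consequence of ideal axioms. My plan is to induct on $|\min M|+|\min N|$; the heart of the argument is the fact that in a regular-sequence setting every syzygy among monomials $\phi^{\mathbf a}$ is generated by the trivial relations $\phi^{\mathbf a\vee\mathbf b-\mathbf a}\,\phi^{\mathbf a}=\phi^{\mathbf a\vee\mathbf b-\mathbf b}\,\phi^{\mathbf b}$, which directly deposit the contributions into ${\mathcal I}(M\cap N)$. A cleaner alternative worth trying is to pass to the associated graded ring of $A$ with respect to the $(\phi_1,\dots,\phi_r)$-adic filtration, which by regularity is a polynomial ring over $A/(\phi_1,\dots,\phi_r)$; there the corresponding monomial-ideal identity is automatic, and one lifts back to $A$ by a standard filtration argument comparing leading forms.
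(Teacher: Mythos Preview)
The paper does not prove this lemma; it is quoted directly as Lemma~3.2 of \cite{Aut2} and used as a black box in Corollary~\ref{maincor}. So there is no in-paper argument to compare against, and your proposal goes beyond what the paper itself supplies.

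Your plan is sound in outline. The principal case $(\phi^{\mathbf a})\cap(\phi^{\mathbf b})=(\phi^{\mathbf a\vee\mathbf b})$ can indeed be established by the cancellation argument you describe, using that in a Noetherian local ring a regular sequence may be permuted (so each $\phi_i$, hence each $\phi^{\mathbf c}$, is a non-zero-divisor, and the $\phi_j$ with $j\ne i$ remain regular modulo $\phi_i$; your side remark ``$c_i\ge0$'' should read $c_i\ge1$). The Dickson reduction to the distributivity identity is also correct. Where the proposal remains only a plan is the distributivity step. Route~(a) via syzygies presupposes that the first syzygy module of a family $\{\phi^{\mathbf a}\}$ on a regular sequence is generated by the Taylor-type relations; this is true, but establishing it is of comparable depth to the lemma itself, so the difficulty has been relocated rather than discharged. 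Route~(b) via $\mathrm{gr}_I(A)\cong(A/I)[T_1,\dots,T_r]$ is the cleaner path, but the ``lifting back'' you allude to is not automatic: one must verify that $\mathcal I(M)$ is compatible with the $I$-adic filtration in the sense that $\mathcal I(M)\cap I^d$ has the expected image in $I^d/I^{d+1}$, and a naive attempt to prove this runs into the very identity you are trying to establish (since $I^d$ is itself of the form $\mathcal I(N_d)$ for a saturated $N_d$). Either route can be completed with additional care, but neither is as short as the sketch suggests.
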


%{\color{green}\begin{remark}\label{remk_aut2_3_2}
%We use Lemma \ref{lemm_aut2_3_2} in the following particular situation:
%Let $\square = (\mathbb R^{+})^r\setminus\{\boldsymbol 0\}$.
% $$\bigtriangleup
%   = \{{\bf t}=(t_1, \dots, t_r)\in (\mathbb R^+)^r ~|~ t_1+\cdots +t_r=1\}.$$
%For each ${\bf t}\in \square$ and $x\in \mathbb R^+$, let 
%$$N({\bf t}, x)=\{{\bf b}\in \mathbb N^r \mid t_1b_1+\cdots +t_rb_r\ge x\}.$$
%Notice that $N({\bf t}, x)\cap N({\bf u}, y)\subset N(\lambda {\bf t}+(1-\lambda){\bf u}, \lambda x+ (1-\lambda)y)$ for all $\lambda\in[0,1]$.
%So, from Lemma \ref{lemm_aut2_3_2}, we have
%\begin{equation}
%{\mathcal I}(N({\bf t}, x))\cap {\mathcal I}(N({\bf u}, y))\subset 
%{\mathcal I}(N(\lambda {\bf t}+(1-\lambda){\bf u}, \lambda x+ (1-\lambda)y))
%\end{equation}
%for any ${\bf t}, {\bf u}\in \square$; $x, y\in  \mathbb R^+$;
%and $\lambda\in[0,1]$.
%\end{remark}
%}
Let $\square_\ell = (\mathbb R^{+})^\ell\setminus\{\boldsymbol 0\}$ for  $\ell\in \mathbb N$.
For each ${\bf t}\in \square_\ell $ and $x\in \mathbb R^+$, let 
$$N_\ell ({\bf t}, x)=\{{\bf b}\in \mathbb N^\ell  \mid t_1b_1+\cdots +t_\ell b_\ell \ge x\}.$$
Let $m$, $r$ and $\epsilon_1,\hdots,\epsilon_r$ be positive integers such that $\sum_{j=1}^r\epsilon_j=m$  (In application, we will use $m\le \dim X$). 
Let $A$ be a local ring and $(\phi_{11}, \dots, \phi_{1\epsilon_1}, \dots, \phi_{r1}, \dots, \phi_{r\epsilon_r})$ be a regular sequence of $A$. 
Then ${\mathcal I}(N_m ({\bf t}, x))$ is the ideal of $A$ generated by 
$$
\{ \phi_{11}^{b_{11}}\cdots \phi_{1\epsilon_1}^{b_{1\epsilon_1}}\cdots\phi_{r1}^{b_{r1}}\cdots \phi_{r\epsilon_r}^{b_{r\epsilon_r}}~|~(b_{11}, \dots, b_{1\epsilon_1}, \dots, b_{r1}, \dots, b_{r\epsilon_r})\in N_m ({\bf t}, x)\}.
$$
Let $I_j$ be the ideal of $A$ generated by $\phi_{j1}, \dots, \phi_{j\epsilon_{j}}$, where $1\le j\le r$.
Denote by ${\mathcal I}_I(N_r ({\bf t}, x))$ the ideal of $A$ generated by 
$$
\{ I_{1}^{b_{1}}\cdots I_{r}^{b_{r}} ~|~{\bf b}\in N_r ({\bf t}, x)\}.
$$ 
The following is our main lemma.
%We prove the following lemma. 
\begin{lemma}\label{switch}
Let ${\bf t}=(t_1,\hdots,t_r)\in \square_r $ and $x\in \mathbb R^+$.  Then
$$
{\mathcal I}_I(N_r ({\bf t}, x))=  {\mathcal I}(N_m ({\bf \tilde t}, x)),
$$
where ${\bf \tilde t}=(t_1,\hdots,t_1,\hdots,t_r,\hdots,t_r)\in \square_m$ with each $t_j$ repeating $\epsilon_j$ times.
\end{lemma}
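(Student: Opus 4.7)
The proof will be a direct comparison of generating sets for the two ideals, and it turns out the regular sequence hypothesis is not needed for this particular statement (it presumably enters later, when one wants to invoke Lemma \ref{lemm_aut2_3_2} in the application). First I would unpack the two sides: by definition $I_j^{b_j}$ is generated, as an ideal of $A$, by all monomials $\phi_{j1}^{c_{j1}}\cdots \phi_{j\epsilon_j}^{c_{j\epsilon_j}}$ with $c_{j1}+\cdots+c_{j\epsilon_j}=b_j$. Consequently, the product $I_1^{b_1}\cdots I_r^{b_r}$ is generated by the monomials $\prod_{j,k}\phi_{jk}^{c_{jk}}$ where $\sum_k c_{jk}=b_j$ for each $j$, and therefore $\mathcal{I}_I(N_r(\mathbf{t},x))$ is generated by all monomials $\prod_{j,k}\phi_{jk}^{c_{jk}}$ whose ``collapsed'' exponent vector $\mathbf{b}=\bigl(\sum_k c_{1k},\ldots,\sum_k c_{rk}\bigr)$ lies in $N_r(\mathbf{t},x)$.

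For the containment $\mathcal{I}_I(N_r(\mathbf{t},x))\subseteq \mathcal{I}(N_m(\tilde{\mathbf{t}},x))$, I would take such a generator $\prod_{j,k}\phi_{jk}^{c_{jk}}$. Since $\tilde t_{jk}=t_j$ independently of $k$, one has
$$\sum_{j,k}\tilde t_{jk}\,c_{jk}=\sum_j t_j\sum_k c_{jk}=\sum_j t_j b_j\ge x,$$
so the exponent vector $(c_{jk})$ already belongs to $N_m(\tilde{\mathbf{t}},x)$ and the monomial is a generator of the right-hand ideal.

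The reverse inclusion goes the same way in reverse: given a generator $\prod_{j,k}\phi_{jk}^{c_{jk}}$ of $\mathcal{I}(N_m(\tilde{\mathbf{t}},x))$, I would set $b_j:=\sum_k c_{jk}$, apply the same identity to conclude $\sum_j t_j b_j\ge x$ (hence $\mathbf{b}\in N_r(\mathbf{t},x)$), and observe that $\prod_k \phi_{jk}^{c_{jk}}\in I_j^{b_j}$ for each $j$, so the whole monomial lies in $I_1^{b_1}\cdots I_r^{b_r}\subseteq \mathcal{I}_I(N_r(\mathbf{t},x))$. There is no genuine obstacle: the entire argument is the bookkeeping observation that weighting each of the $\epsilon_j$ ``copies'' of $t_j$ in $\tilde{\mathbf{t}}$ and then summing is the same as weighting the single $t_j$ against $b_j=\sum_k c_{jk}$ and then summing, so the two index sets $N_r(\mathbf{t},x)$ and $N_m(\tilde{\mathbf{t}},x)$ translate into exactly the same collection of monomial generators.
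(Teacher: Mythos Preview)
Your proof is correct and follows essentially the same approach as the paper: both argue by comparing generators, reducing each inclusion to the identity $\sum_{j,k}\tilde t_{jk}c_{jk}=\sum_j t_j\bigl(\sum_k c_{jk}\bigr)=\sum_j t_jb_j$. Your remark that the regular sequence hypothesis is not used here is accurate; the paper likewise never invokes it in this lemma, reserving it for the application of Lemma~\ref{lemm_aut2_3_2} in Corollary~\ref{maincor}.
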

\begin{proof}
It suffices to check the inclusions on generators on both ideals.
The generators of ${\mathcal I}_I(N_r ({\bf t}, x))$ come  from 
$I_{1}^{b_{1}}\cdots I_{r}^{b_{r}}$ with ${\bf b}\in N_r ({\bf t}, x)$.
Therefore, they are in the following form:
\begin{align}\label{generators}
\phi_{11}^{b_{11}}\cdots \phi_{1\epsilon_1}^{b_{1\epsilon_1}}\cdots\phi_{r1}^{b_{r1}}\cdots \phi_{r\epsilon_r}^{b_{r\epsilon_r}}, 
\end{align}
where the $b_{ji}$ are nonnegative integers such that $b_{j1}+\cdots+b_{j\epsilon_j}=b_j$ for $1\le j\le r$.
Since ${\bf b}\in N_r ({\bf t}, x)$, we see that
\begin{align*}
 t_1b_{11}+\cdots  + t_1b_{1\epsilon_1} +\cdots   +t_rb_{r1}+\cdots +t_r b_{r\epsilon_r}=t_1b_1+\cdots+b_rt_r\ge x.  
\end{align*}
Consequently,
$(b_{11},\hdots,b_{1\epsilon_1},\hdots,b_{r1},\hdots,b_{r\epsilon_r})\in N_m ({\bf \tilde t}, x)$.  This shows that 
${\mathcal I}_I(N_r ({\bf t}, x))$ is a subset of ${\mathcal I}(N_m ({\bf \tilde t}, x))$.

On the other hand, the generator of 
${\mathcal I}(N_m ({\bf \tilde t}, x))$ are of the  form as in \eqref{generators} with
\begin{align}\label{tj}
 t_1b_{11}+\cdots   +t_1b_{1\epsilon_1} +\cdots   +t_rb_{r1}+\cdots +t_r b_{r\epsilon_r}\ge x.  
\end{align}
For each $1\le j\le r$, we let $b_j=b_{j1}+\cdots+b_{j\epsilon_j}$.  Then
$\phi_{j1}^{b_{j1}}\cdots \phi_{j\epsilon_j}^{b_{j\epsilon_j}}\in I_j^{b_j}$  and
$t_1b_1+\cdots+b_rt_r\ge x$ by \eqref{tj}.  This shows that ${\mathcal I}(N_m ({\bf \tilde t}, x))$ is contained in ${\mathcal I}_I(N_r ({\bf t}, x))$.
\end{proof}
\begin{corollary}\label{maincor}
Let ${\bf t}, {\bf u}\in \square_r $ and $x,y\in \mathbb R^+$.  Then
$$
{\mathcal I}_I(N_r ({\bf t}, x))\cap {\mathcal I}_I(N_r ({\bf u}, x))\subset {\mathcal I}_I(N_r (\lambda{\bf t}+(1-\lambda){\bf u}, \lambda x+(1-\lambda)y ))
$$
 for all $ \lambda\in[0,1].$
\end{corollary}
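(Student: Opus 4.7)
The plan is to reduce the statement to the corresponding fact about the ideals ${\mathcal I}(N_m(\cdot,\cdot))$ attached to the full regular sequence $(\phi_{11},\dots,\phi_{r\epsilon_r})$, where Lemma \ref{lemm_aut2_3_2} applies directly. (I read the statement with $y$ in place of the second $x$ on the left-hand side, which I take to be a typo; otherwise one simply specializes $y=x$.)

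First, I would apply Lemma \ref{switch} to each of the two ideals on the left, writing
$$
{\mathcal I}_I(N_r({\bf t},x)) \cap {\mathcal I}_I(N_r({\bf u},y)) \;=\; {\mathcal I}(N_m(\tilde{\bf t},x)) \cap {\mathcal I}(N_m(\tilde{\bf u},y)),
$$
where $\tilde{\bf t},\tilde{\bf u}\in\square_m$ are obtained by repeating each coordinate $t_j$ (resp.\ $u_j$) exactly $\epsilon_j$ times. A quick check shows that both $N_m(\tilde{\bf t},x)$ and $N_m(\tilde{\bf u},y)$ are saturated subsets of $\mathbb N^m$, since adding a vector in $\mathbb N^m$ can only increase the relevant linear form. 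I can therefore invoke Lemma \ref{lemm_aut2_3_2} applied to the regular sequence $(\phi_{11},\dots,\phi_{r\epsilon_r})$ to obtain
$$
{\mathcal I}(N_m(\tilde{\bf t},x)) \cap {\mathcal I}(N_m(\tilde{\bf u},y)) \;=\; {\mathcal I}\bigl(N_m(\tilde{\bf t},x)\cap N_m(\tilde{\bf u},y)\bigr).
$$

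Next comes a simple convexity observation at the level of the index sets. If ${\bf b}\in\mathbb N^m$ satisfies both $\tilde{\bf t}\cdot{\bf b}\ge x$ and $\tilde{\bf u}\cdot{\bf b}\ge y$, then for any $\lambda\in[0,1]$ the convex combination of these inequalities gives
$$
(\lambda\tilde{\bf t}+(1-\lambda)\tilde{\bf u})\cdot{\bf b} \;\ge\; \lambda x+(1-\lambda)y,
$$
so ${\bf b}\in N_m(\lambda\tilde{\bf t}+(1-\lambda)\tilde{\bf u},\,\lambda x+(1-\lambda)y)$. Passing to generated ideals preserves this inclusion.

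Finally, I observe that the tilde operation is $\mathbb R$-linear (it is just a coordinate-repeating linear map $\mathbb R^r\to\mathbb R^m$), so $\lambda\tilde{\bf t}+(1-\lambda)\tilde{\bf u}=\widetilde{\lambda{\bf t}+(1-\lambda){\bf u}}$. Applying Lemma \ref{switch} in the reverse direction then converts the resulting ideal back into ${\mathcal I}_I(N_r(\lambda{\bf t}+(1-\lambda){\bf u},\,\lambda x+(1-\lambda)y))$, which is the desired inclusion. There is no serious obstacle; the only points needing care are verifying saturation of the sets $N_m(\tilde{\bf t},x)$ so that Lemma \ref{lemm_aut2_3_2} applies, and the $\mathbb R$-linearity of the tilde operation so that the two applications of Lemma \ref{switch} fit together.
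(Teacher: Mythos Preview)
Your proposal is correct and follows essentially the same route as the paper: apply Lemma~\ref{switch} to pass to the ideals ${\mathcal I}(N_m(\tilde{\bf t},x))$ and ${\mathcal I}(N_m(\tilde{\bf u},y))$, use Lemma~\ref{lemm_aut2_3_2} together with the elementary convexity inclusion $N_m(\tilde{\bf t},x)\cap N_m(\tilde{\bf u},y)\subset N_m(\lambda\tilde{\bf t}+(1-\lambda)\tilde{\bf u},\lambda x+(1-\lambda)y)$, and then apply Lemma~\ref{switch} again (via the linearity of the tilde map) to return to ${\mathcal I}_I$. Your observations about saturation and the linearity of the tilde operation are exactly the small verifications the paper leaves implicit, and your reading of the second $x$ as a typo for $y$ is consistent with how the paper itself uses the corollary.
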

\begin{proof}
By Lemma \ref{switch}, Lemma \ref{lemm_aut2_3_2} and that
$N_m ({\bf \tilde t}, x)\cap N_m ({\bf \tilde u}, y)\subset N_m(\lambda{\bf \tilde t}+(1-\lambda){\bf \tilde u}, \lambda x+(1-\lambda)y) $  for all $ \lambda\in[0,1]$, we have
\begin{align*}
{\mathcal I}_I(N_r ({\bf t}, x))\cap {\mathcal I}_I(N_r ({\bf u}, x))&={\mathcal I}(N_m ({\bf \tilde t}, x))\cap {\mathcal I}(N_m ({\bf \tilde u}, y))\cr
&\subset {\mathcal I}(N_m(\lambda{\bf \tilde t}+(1-\lambda){\bf \tilde u}, \lambda x+(1-\lambda)y) )\\
&={\mathcal I}_I(N_r(\lambda{\bf   t}+(1-\lambda){\bf   u}, \lambda x+(1-\lambda)y) ).
\end{align*}
\end{proof}

We will construct a filtration corresponding to the closed   subschemes  $Y_i$, $1\le i\le q$.  We recall the following definition and result concerning filtration.

\begin{definition} Let $W$ be a vector space of finite dimension.
A \textbf{filtration} of $W$ is a family of subspaces
${\mathcal F}=({\mathcal F}_x)_{x\in \mathbb R^+}$ of subspaces of 
$W$ such that $\mathcal F_x\supseteq\mathcal F_y$ whenever $x\le y$,
and such that ${\mathcal F}_x=\{0\}$ for $x$ big enough.
A basis ${\mathcal B}$ of $W$
is said to be \textbf{adapted to $\mathcal F$} if 
${\mathcal B}\cap {\mathcal F}_x$ is a basis of ${\mathcal F}_x$ for every real number $x\ge 0$.
\end{definition}

\begin{lemma}[Corvaja--Zannier {\cite[Lemma~3.2]{cz_annals}},
  Levin \cite{levin_annals}, Autissier \cite{Aut2}]\label{lemm_filt}
Let ${\mathcal F}$ and ${\mathcal G}$ be two filtrations of $W$.
Then there exists a basis 
of $W$ which is adapted to both ${\mathcal F}$ and ${\mathcal G}$.
\end{lemma}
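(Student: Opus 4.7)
The plan is to prove Lemma \ref{lemm_filt} by induction on $n = \dim W$; the case $n=0$ is vacuous. For the inductive step, the key idea is to pick the smallest nonzero subspace of one of the two filtrations and use it as a pivot, applying the inductive hypothesis on both a subspace and a quotient of $W$.

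Concretely, since $\{\mathcal F_x\}_{x\in\mathbb R^+}$ is a totally ordered chain (any two members are comparable by inclusion) with only finitely many distinct members, there is a well-defined smallest nonzero member $V$ of this chain. If $V = W$, then $\mathcal F$ takes only the values $W$ and $\{0\}$, and any basis of $W$ adapted to $\mathcal G$---produced by the elementary one-filtration algorithm of choosing a basis of the smallest nonzero subspace and successively extending through the chain---is automatically adapted to $\mathcal F$ as well. One may therefore assume $V\subsetneq W$, so that $\dim V < n$ and $\dim(W/V) < n$.

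I would then invoke the inductive hypothesis twice: first on $V$ with the restricted filtrations $(\mathcal F|_V)_x = \mathcal F_x \cap V$ and $(\mathcal G|_V)_y = \mathcal G_y \cap V$, producing a common adapted basis $\mathcal B_V$ of $V$; second on $W/V$ with the induced filtrations $\bar{\mathcal F}_x = (\mathcal F_x + V)/V$ and $\bar{\mathcal G}_y = (\mathcal G_y + V)/V$, producing a common adapted basis $\bar{\mathcal B}$ of $W/V$. The candidate basis of $W$ is then $\mathcal B = \mathcal B_V \cup \{e_{\bar e} : \bar e\in \bar{\mathcal B}\}$, where each $e_{\bar e}\in W$ is a lift of $\bar e$ chosen as described next.

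The main obstacle, and the step that dictates the choice of $V$, is lifting each $\bar e\in\bar{\mathcal B}$ to a vector $e\in W$ that simultaneously lies in $\mathcal F_{x_{\max}(\bar e)}\cap \mathcal G_{y_{\max}(\bar e)}$, where $x_{\max}(\bar e) = \sup\{x:\bar e\in\bar{\mathcal F}_x\}$ and $y_{\max}(\bar e) = \sup\{y:\bar e\in\bar{\mathcal G}_y\}$. This succeeds precisely because $V$ is the \emph{smallest} nonzero subspace of $\mathcal F$: the chain structure forces $V \subseteq \mathcal F_{x_{\max}(\bar e)}$, so any lift $e\in \mathcal G_{y_{\max}(\bar e)}$ of $\bar e$---one exists because $\mathcal G_{y_{\max}(\bar e)}$ maps onto $\bar{\mathcal G}_{y_{\max}(\bar e)}$---differs from any preferred lift in $\mathcal F_{x_{\max}(\bar e)}$ by an element of $V\subseteq \mathcal F_{x_{\max}(\bar e)}$, and therefore itself lies in $\mathcal F_{x_{\max}(\bar e)}$. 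With these simultaneous lifts in hand, a routine dimension count (using $\dim\bar{\mathcal F}_x = \dim\mathcal F_x - \dim V$ when $\mathcal F_x\neq 0$, and $\dim\bar{\mathcal G}_y = \dim\mathcal G_y - \dim(V\cap\mathcal G_y)$) gives $|\mathcal B\cap \mathcal F_x| = \dim \mathcal F_x$ and $|\mathcal B\cap \mathcal G_y| = \dim \mathcal G_y$ for every $x,y$, so $\mathcal B$ is adapted to both filtrations, completing the induction.
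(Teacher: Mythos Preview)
The paper does not supply its own proof of this lemma; it is quoted from the literature (Corvaja--Zannier, Levin, Autissier) and used as a black box, so there is nothing in the paper to compare against.

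Your inductive argument is correct and is in the spirit of the standard proofs in the cited references. One minor technical point deserves attention: with the filtration indexed by $\mathbb R^+$ as in the paper's definition, the supremum $x_{\max}(\bar e)=\sup\{x:\bar e\in\bar{\mathcal F}_x\}$ need not be attained (e.g.\ if $\mathcal F_x=W$ for $x<1$ and $\mathcal F_x=0$ for $x\ge 1$, then $x_{\max}=1$ but $\bar e\notin\bar{\mathcal F}_1$), and in that case $\mathcal F_{x_{\max}(\bar e)}$ could be zero, breaking the inclusion $V\subseteq\mathcal F_{x_{\max}(\bar e)}$ you rely on. This is harmless because each filtration takes only finitely many distinct values; simply replace ``$\mathcal F_{x_{\max}(\bar e)}$'' by ``the smallest member of the finite chain $\{\mathcal F_x\}_x$ whose image in $W/V$ contains $\bar e$'' (and similarly for $\mathcal G$). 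With that rewording the lifting step and the subsequent dimension count go through exactly as you wrote them.
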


Let $\mathscr L$ be a big line sheaf, and let $Y_1,\dots,Y_r$ be   closed subschemes of $X$ over $k$ intersecting properly on $X$
and that $\cap_{i=1}^r  \mbox{Supp} Y_j\ne\emptyset$.
For any fixed ${\bf t}\in \square_r$, we construct a filtration of $H^0(X, \mathscr L)$ as follows: 
for  $x\in  \mathbb R^+$, 
one defines the ideal  sheaf  ${\mathscr I}({\bf t}, x)$ of ${\mathscr O}_X$ by
\begin{equation}\label{def_I}
 {\mathscr I}({\bf t}, x)
    = \sum_{{\bf b}\in N_r ({\bf t}, x)} \otimes_{i=1}^r{\mathscr I}_{Y_i}^{b_i}\;,
\end{equation}
and let 
\begin{equation}\label{def_filtr_h_0}
  {\mathcal F}({\bf t})_x
    = H^0(X, \mathscr L \otimes {\mathscr I}({\bf t}, x))\;.
\end{equation}
Then $({\mathcal F}({\bf t})_x)_{x\in \mathbb R^+}$ is a filtration of $H^0(X, \mathscr L)$.

For $s\in H^0(X, \mathscr L) -\{0\}$, let $\mu_{\bf t}(s)=\sup\{y\in \mathbb R^+~|~s\in {\mathcal F}({\bf t})_y\}.$
Also let
\begin{equation}\label{def_F}
  F({\bf t})
    = \frac1{h^0(\mathscr L)}\int_0^{+\infty}(\dim{\mathcal F}({\bf t})_x)\,dx\;.
\end{equation}

Note that, for all $u>0$ and all $\mathbf t\in\square_r$, 
we have $N_r(u\mathbf t,x) = N_r(\mathbf t,u^{-1}x)$, which implies
$\mathcal F(u\mathbf t)_x = \mathcal F(\mathbf t)_{u^{-1}x}$, and therefore
\begin{equation}\label{how_F_scales}
  F(u\mathbf t)
    = \int_0^\infty
      \frac{\dim\mathscr F(\mathbf t)_{u^{-1}x}}{h^0(\mathscr L)}\,dx
    = u\int_0^\infty \frac{\dim\mathscr F(\mathbf t)_y}{h^0(\mathscr L)}\,dy
    = uF(\mathbf t)\;.
\end{equation}

\begin{remark}\label{remk_levin_aut}
Let ${\mathcal B}=\{s_1, \dots, s_l\}$ be a basis of $H^0(X, \mathscr L)$ with $l=h^0(\mathscr L)$. Then we have
$$F({\bf t})\ge \frac1l \int_0^{\infty} \#({\mathcal F}({\bf t})_x\cap {\mathcal B})dx=\frac1l \sum_{k=1}^l \mu_{{\bf t}}(s_k),$$
where equality holds if  ${\mathcal B}$ is adapted to the filtration $({\mathcal F}({\bf t})_x)_{x\in \mathbb R^+}.$
\end{remark}
The key result we will use about this filtration is the following Proposition, which is a generalization of  Th\'eor\`eme 3.6 in \cite{Aut2}. 

\begin{proposition}\label{aut2_thm_3_6}
With the notations and assumptions above, let
$F: \square_{r} \rightarrow \mathbb R^+$ be the map defined in (\ref{def_F}).
Then $F$ is concave.
In particular, for all $\beta_1,\dots,\beta_r\in(0,\infty)$
and all ${\bf t} \in \square_{r}$ satisfying $\sum_{i=1}^r \beta_i t_i=1$,
\begin{equation}\label{aut_ineq}
  F({\bf t})
    \ge \min_i \left(\frac1{\beta_i} \sum_{m\ge 1}
      \frac{h^0(\mathscr L\otimes {\mathscr I}_{Y_i}^{m})}{h^0(\mathscr L)}\right)\;.
\end{equation}
\end{proposition}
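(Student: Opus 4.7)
The plan is to adapt Autissier's proof of Th\'eor\`eme 3.6 of \cite{Aut2} from divisors to closed subschemes, with Lemma \ref{switch} and Corollary \ref{maincor} playing the role of the intersection lemma at the heart of that argument. To prove concavity, fix ${\bf t}, {\bf u} \in \square_r$ and $\lambda \in [0,1]$, and apply Lemma \ref{lemm_filt} to choose a basis $\mathcal{B} = \{s_1, \ldots, s_l\}$ of $H^0(X, \mathscr{L})$ adapted to both filtrations $\mathcal{F}({\bf t})$ and $\mathcal{F}({\bf u})$. By Remark \ref{remk_levin_aut}, the values $F({\bf t})$ and $F({\bf u})$ are exactly the averages $\frac{1}{l}\sum_k \mu_{{\bf t}}(s_k)$ and $\frac{1}{l}\sum_k \mu_{{\bf u}}(s_k)$, while $F(\lambda{\bf t}+(1-\lambda){\bf u}) \ge \frac{1}{l}\sum_k \mu_{\lambda{\bf t}+(1-\lambda){\bf u}}(s_k)$. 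So concavity reduces to the pointwise estimate $\mu_{\lambda{\bf t}+(1-\lambda){\bf u}}(s) \ge \lambda\mu_{{\bf t}}(s)+(1-\lambda)\mu_{{\bf u}}(s)$ for each $s \in \mathcal{B}$, which in turn follows from the ideal-sheaf inclusion
$$
\mathscr{I}({\bf t}, x)\cap \mathscr{I}({\bf u}, y)\ \subseteq\ \mathscr{I}\bigl(\lambda{\bf t}+(1-\lambda){\bf u},\ \lambda x+(1-\lambda)y\bigr)
$$
with $x=\mu_{{\bf t}}(s)$ and $y=\mu_{{\bf u}}(s)$.

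I would verify this inclusion stalk by stalk. Fix $p \in X$ and set $I_p = \{j : p \in \Supp Y_j\}$. If some $t_k > 0$ with $k \notin I_p$, then $\mathscr{I}_{Y_k,p} = \mathscr{O}_{X,p}$, and by taking $b_k$ sufficiently large (and all other $b_j=0$) in the sum defining $\mathscr{I}({\bf t}, x)$ one sees that the stalk $\mathscr{I}({\bf t}, x)_p$ is all of $\mathscr{O}_{X,p}$; the symmetric argument for ${\bf u}$ and for $\lambda{\bf t}+(1-\lambda){\bf u}$ (whose $k$-th component is still positive when $\lambda\in(0,1)$) makes the inclusion trivial in that case. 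In the remaining case both ${\bf t}$ and ${\bf u}$ are supported inside $I_p$, and a direct calculation shows that $\mathscr{I}({\bf t}, x)_p$ coincides with the ideal $\mathcal{I}_I(N_{|I_p|}({\bf t}|_{I_p}, x))$ appearing in Lemma \ref{switch}, with $I_j := \mathscr{I}_{Y_j,p}$ for $j \in I_p$. The ``intersecting properly'' hypothesis supplies a regular sequence in $\mathscr{O}_{X,p}$ built from the local generators of the $Y_j$ ($j \in I_p$), so Corollary \ref{maincor} applied to ${\bf t}|_{I_p}$ and ${\bf u}|_{I_p}$ delivers the required stalkwise inclusion. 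Summing over $\mathcal{B}$ yields concavity of $F$.

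For the ``in particular'' bound, set ${\bf t}_i = \beta_i^{-1}{\bf e}_i \in \square_r$, where ${\bf e}_i$ is the $i$-th standard basis vector, so that $\sum_j\beta_j({\bf t}_i)_j = 1$. Any ${\bf t}\in\square_r$ with $\sum_i\beta_i t_i = 1$ is the convex combination ${\bf t} = \sum_i(\beta_i t_i)\,{\bf t}_i$, so iterated concavity gives $F({\bf t}) \ge \min_i F({\bf t}_i)$. Because $N_r({\bf e}_i, x) = \{{\bf b}: b_i \ge x\}$, the defining sum collapses to $\mathscr{I}({\bf e}_i, x) = \mathscr{I}_{Y_i}^{\lceil x\rceil}$, and an elementary integration combined with the scaling identity \eqref{how_F_scales} yields $F({\bf t}_i) = \beta_i^{-1}\sum_{m\ge 1} h^0(\mathscr{L}\otimes \mathscr{I}_{Y_i}^m)/h^0(\mathscr{L})$, completing the proof. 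The main obstacle I anticipate is the stalkwise reduction in the preceding paragraph: one must carefully track how the sum defining $\mathscr{I}({\bf t}, x)$ behaves at mixed points $p$, verifying that the ``non-support'' indices either trivialize the stalk or can be discarded, so that Lemma \ref{switch} and Corollary \ref{maincor} apply via the regular sequence provided by proper intersection; everything else is a direct translation of the divisorial argument in \cite{Aut2}.
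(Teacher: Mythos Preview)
Your proposal is correct and follows essentially the same route as the paper: both arguments reduce concavity of $F$ to the ideal-sheaf inclusion $\mathscr{I}({\bf t},x)\cap\mathscr{I}({\bf u},y)\subseteq\mathscr{I}(\lambda{\bf t}+(1-\lambda){\bf u},\lambda x+(1-\lambda)y)$, invoke Corollary~\ref{maincor} (via the regular sequence furnished by proper intersection) to establish it, and then deduce \eqref{aut_ineq} by evaluating $F$ at the rescaled standard basis vectors and using the homogeneity \eqref{how_F_scales}. Your stalkwise case analysis at a general point $p$ is in fact more careful than the paper's argument, which simply works at a single point $\frak p\in\bigcap_i\Supp Y_i$ and leaves the verification at other stalks implicit.
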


 The proof is almost the same as for Th\'eor\`eme 3.6 in \cite{Aut2} once Corollary \ref{maincor} is established.
We include a proof here for the sake of completeness.
\begin{proof} For any ${\bf t}, {\bf u}\in \square_r$ and $\lambda \in [0, 1]$, we need to prove that
\begin{equation}F(\lambda {\bf t}+(1-\lambda){\bf u})\ge \lambda F({\bf t})+(1-\lambda)F({\bf u}).\end{equation}
By Lemma \ref{lemm_filt}, there exists a basis ${\mathcal B}=\{s_1, \dots, s_l\}$ of $H^0(X,\mathscr L)$ with $l=h^0(\mathscr  L)$, 
which is adapted both to $({\mathcal F}({\bf t})_x)_{x\in \mathbb R^+}$
and to $({\mathcal F}({\bf u})_y)_{y\in \mathbb R^+}$.
Let $\frak p\in \cap_{i=1}^r \mbox{Supp} Y_j$ be a point and $\mathcal O_{\frak p,X}$ be the local ring of $X$ at $\frak p$ and 
$\phi_{i,1}\hdots\phi_{i,\epsilon_i}$ be the local defining function of $Y_i$ at $\frak p$.  By our assumption,
$(\phi_{11}, \dots, \phi_{1\epsilon_1}, \dots, \phi_{r1}, \dots, \phi_{r\epsilon_r})$ is a regular sequence of $\mathcal O_{\frak p,X}$.
Hence,  for $x, y\in \mathbb R^+$, by Corollary \ref{maincor},  
$${\mathcal F}({\bf t})_x\cap {\mathcal F}({\bf u})_y
\subset 
{\mathcal F}(\lambda {\bf t}+(1-\lambda){\bf u})_{\lambda x+ (1-\lambda)y}.$$
For $s\in H^0(X, \mathscr L)-\{0\}$, we have, from the definition of $\mu_{{\bf t}}(s)$ and $\mu_{{\bf u}}(s)$, 
$s\in {\mathcal F}(\lambda {\bf t}+(1-\lambda){\bf u})_{\lambda x+ (1-\lambda)y}$ 
for $x<\mu_{{\bf t}}(s)$ and $y<\mu_{{\bf u}}(s)$, and thus
$$\mu_{\lambda {\bf t}+(1-\lambda){\bf u}}(s)\ge \lambda  \mu_{{\bf t}}(s) +(1-\lambda)\mu_{{\bf u}}(s).$$
Taking $s=s_j$ and summing it over $j=1, \dots, l$, we get,
by Remark \ref{remk_levin_aut}, 
$$F(\lambda {\bf t}+(1-\lambda){\bf u})\ge \lambda \frac1l\sum_{j=1}^l  \mu_{{\bf t}}(s_j) +(1-\lambda)\frac1l\sum_{j=1}^l
\mu_{{\bf u}}(s_j).$$
On the other hand, since ${\mathcal B}=\{s_1, \dots, s_l\}$ is a basis
adapted to both ${\mathcal F}({\bf t})$ and ${\mathcal F}({\bf u})$,
from Remark \ref{remk_levin_aut},
$F({\bf t})=\frac1l\sum_{j=1}^l \mu_{{\bf t}}(s_j)$
and $F({\bf u})=\frac1l\sum_{j=1}^l \mu_{{\bf u}}(s_j)$.
Thus $$F(\lambda {\bf t}+(1-\lambda){\bf u})\ge \lambda F({\bf t})+(1-\lambda)F({\bf u}),$$
 which proves that  $F$ is a convex function. 

To prove (\ref{aut_ineq}), let ${\bf e}_1=(1, 0, \dots, 0)$, $\cdots$, 
${\bf e}_r=(0, 0, \dots, 1)$ be the standard basis of $\mathbb R^r$, 
and let $\mathbf t$ be as in (\ref{aut_ineq}).  Then, by convexity of $F$
and by (\ref{how_F_scales}), we get
$$F({\bf t}) \ge \min_i F(\beta_i^{-1}\mathbf e_i)
  = \min_i \beta_i^{-1}F(\mathbf e_i)$$
and, obviously,
$F({\bf e}_i) = \frac1{h^0(\mathscr L)}\sum_{m\ge 1} h^0(\mathscr L\otimes {\mathscr I}_{Y_i}^{m})$
for $i=1, \dots, r$.
\end{proof}
\section{Proof of the Main Theorem }
We first recall the following basic theorem from \cite{RV19}.
\begin{theorem}[\cite{RV19}, Theorem 2.10]\label{schmidt_base}
Let $k$ be a number field, let $S$ be a finite set of places of $k$ containing
all archimedean places, let $X$ be a complete variety over $k$, let $D$ be
a Cartier divisor on $X$,  let $V$ be a nonzero linear subspace of
$H^0(X,\mathscr O(D))$, let $s_1,\dots,s_q$ be nonzero elements of $V$,
let $\epsilon>0$, and let $c\in\mathbb R$.
For each $i=1,\dots,q$, let $D_j$ be the Cartier divisor $(s_j)$,
and let $\lambda_{D_j}$ be a Weil function for $D_j$.
Then there is a proper Zariski-closed subset $Z$ of $X$, depending only
on $k$, $S$, $X$, $L$, $V$, $s_1,\dots,s_q$, $\epsilon$, $c$, and the
choices of Weil and height functions, such that the inequality
\begin{equation}\label{ineq_schmidt_base}
  \sum_{\upsilon\in S}\max_J\sum_{j\in J}\lambda_{D_j,\upsilon}(x)
    \le (\dim V+\epsilon)h_D(x) + c
\end{equation}
holds for all $x\in(X\setminus Z)(k)$.  Here the set $J$ ranges
over all subsets of $\{1,\dots,q\}$ such that the sections $(s_j)_{j\in J}$
are linearly independent.
\end{theorem}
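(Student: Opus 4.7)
The strategy is to reduce the statement to the classical Schmidt subspace theorem for hyperplanes in projective space, by using the sections in $V$ to define a rational map $X\dashrightarrow\mathbb P^{n-1}$ where $n=\dim V$. This is the standard ``geometric'' reformulation of Schmidt's theorem, so the contribution is essentially a translation via functoriality of Weil and height functions.

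Concretely, fix a basis $t_1,\dots,t_n$ of $V$. These sections define a rational map $\varphi:X\dashrightarrow \mathbb P^{n-1}$ whose domain of definition is $U:=X\setminus B(V)$, where $B(V)$ denotes the base locus of $V$. Writing each section as $s_j=\sum_{k=1}^n a_{jk}t_k$ for scalars $a_{jk}\in k$, the divisor $D_j=(s_j)$ corresponds under $\varphi$ to the hyperplane $H_j=\{\sum_k a_{jk}X_k=0\}$ in $\mathbb P^{n-1}$. The key bookkeeping observation is that $(s_j)_{j\in J}$ are linearly independent in $V$ iff the linear forms defining $(H_j)_{j\in J}$ are, i.e.\ the hyperplanes $(H_j)_{j\in J}$ are in general position. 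By functoriality of Weil functions (applied to the isomorphism $\varphi^*\mathcal O_{\mathbb P^{n-1}}(1)\cong\mathcal O(D)|_U$), one has on $U$
\begin{equation*}
\lambda_{D_j,v}(x)=\lambda_{H_j,v}(\varphi(x))+\alpha_{j,v}\qquad\text{and}\qquad h_D(x)=h_{\mathcal O(1)}(\varphi(x))+O(1),
\end{equation*}
where the $(\alpha_{j,v})$ are $M_k$-constants and the $O(1)$ is bounded independently of $x$.

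Now apply the classical Schmidt subspace theorem for hyperplanes in $\mathbb P^{n-1}$, in its ``max over linearly independent subsets'' variant due to Schmidt, Schlickewei and Vojta. This produces a proper Zariski-closed subset $E\subset\mathbb P^{n-1}$ (a finite union of proper linear subspaces) such that
\begin{equation*}
\sum_{v\in S}\max_J\sum_{j\in J}\lambda_{H_j,v}(y)\le ((n-1)+1+\varepsilon)h_{\mathcal O(1)}(y)+O(1)=(\dim V+\varepsilon)h_{\mathcal O(1)}(y)+O(1)
\end{equation*}
for all $y\in(\mathbb P^{n-1}\setminus E)(k)$. Pulling back along $\varphi|_U$ and absorbing all additive constants into the single constant $c$, the desired inequality holds for every $x\in X$ outside $Z:=B(V)\cup\overline{(\varphi|_U)^{-1}(E)}$.

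The main subtlety, and the only real obstacle, is to ensure that $Z$ is a \emph{proper} Zariski-closed subset of $X$. The base locus $B(V)$ is proper since $V\neq 0$. The closed set $\overline{(\varphi|_U)^{-1}(E)}$ is proper provided the Zariski closure of the image $Y:=\overline{\varphi(U)}\subset\mathbb P^{n-1}$ is not contained in any irreducible component of $E$. If some component of $E$ happens to contain $Y$, one refines the argument by working on $Y$ itself: since $t_1,\dots,t_n$ remain linearly independent after restriction to $Y$ (a linear combination vanishing on $Y$ would vanish on $\varphi(U)$, so its pullback vanishes on $U$ and hence on $X$), one obtains an injection $V\hookrightarrow H^0(Y,\mathcal O_Y(1))$ and can apply Schmidt's subspace theorem either on $Y$ directly or on the smallest linear subspace of $\mathbb P^{n-1}$ containing $Y$ (which has dimension $n-1$ by nondegeneracy). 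The resulting exceptional set is proper in $Y$, so its preimage closure in $X$ is proper, completing the proof.
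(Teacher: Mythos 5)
The paper does not prove this statement; it is Theorem 2.10 of Ru--Vojta \cite{RV19}, quoted here without proof as an input to the Main Theorem. So there is no ``paper's own proof'' to compare against, and I will assess your argument on its own. Your reduction to the classical subspace theorem for hyperplanes via the rational map $\varphi$ attached to $V$ is indeed the standard route, and the nondegeneracy argument at the end is correct (and in fact makes the ``refinement'' superfluous: once $V\hookrightarrow H^0(Y,\mathcal O_Y(1))$ shows $Y$ spans $\mathbb P^{n-1}$, $Y$ cannot lie in any proper linear component of $E$, so no second pass is needed).

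However, there is a genuine gap in the two displayed functoriality identities. You assert $\lambda_{D_j,v}(x)=\lambda_{H_j,v}(\varphi(x))+\alpha_{j,v}$ with $(\alpha_{j,v})$ an $M_k$-constant and $h_D(x)=h_{\mathcal O(1)}(\varphi(x))+O(1)$ ``by functoriality.'' This is only true when the linear system $V$ has no fixed divisorial component. In general one has $D_j=\varphi^*H_j+B$ where $B\ge 0$ is the fixed part of $V$; although $B$ restricts to zero on $U=X\setminus B(V)$, the Weil function $\lambda_{B,v}$ and the height $h_B$ are \emph{not} bounded on $U$ (they are large at points $v$-adically close to $\mathrm{Supp}(B)$). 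So both identities are off by $\lambda_{B,v}(x)$, resp.\ $h_B(x)$, and these discrepancies do not cancel in the final inequality: the left side of \eqref{ineq_schmidt_base} gains up to $n\sum_{v\in S}\lambda_{B,v}(x)$ (since $|J|\le n$), while the right side gains $(n+\epsilon)h_B(x)$. The argument can be repaired by explicitly splitting off $B$, using $m_{B,S}(x)\le h_B(x)+O(1)$ (Weil's inequality for the effective divisor $B$) and $h_B(x)\ge -O(1)$ to check $n\,m_{B,S}(x)\le (n+\epsilon)h_B(x)+O(1)$; but as written, in the presence of a fixed component your proof does not establish the claimed inequality.
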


%{\color{green}
%\begin{proposition}\label{emptyweil}
%Let $X$ be a complete variety over $k$; let $Y_1,\hdots,Y_q$ be closed subscheme of $X$; let $Y=Y_1+\cdots+Y_q$.
%Let
%$$
%\Sigma= \big\{ \sigma\subset\{1,\hdots,q\}\,|\,\cap_{j\in\sigma} {\rm Supp} Y_j\ne\emptyset\big\}.
%$$
%For each $\sigma\in\Sigma$, write 
%\begin{align}
%Y=Y_{\sigma,1}+Y_{\sigma,2},
%\end{align}
%where 
%$$
%Y_{\sigma,1}=\sum_{j\in\sigma} Y_j\quad\text{and}\quad Y_{\sigma,2}=\sum_{j\notin\sigma} Y_j.
%$$
%Let $\lambda_{Y_{\sigma,2}}=\sum_{j\notin\sigma}\lambda_{Y_j}$, for all $\sigma\in\Sigma$.  Then there exists an $M_k$-constant $(c_v)_{v\in M_k}$ such that
%$$
%\min_{\sigma\in\Sigma}\lambda_{Y_{\sigma,2},v}(\frak p)\le c_v
%$$
%for all $\frak p\in X(k)$ and $v\in M_k$.
%\end{proposition}

%\begin{proof}
%It follows from that $\cap_{j\notin\sigma} Y_j =\emptyset$.
%\end{proof}
%}
\begin{proof}[Proof of  the Main Theorem]
Let $Y_1,\dots,Y_q$ be 
%locally complete intersection closed
 closed  subschemes of $X$ over $k$ intersecting properly on $X$, and let $\mathscr L$ be a big line sheaf on $X$.
Recall that $n=\dim X$.  
%We note that we get a stronger result by adding more $Y_i$ to the left hand side of \eqref{Ga_ineq}.  Therefore, we will assume that $\cap_{i=1}^q{\rm Supp} Y_i=\emptyset$.

Let $\epsilon>0$ be given.  Since the quantities $m_{Y_i,S}( \frak p)/h_{\mathscr L}(\frak p)$
%and $m_f(r, D_i)/T_{f, \mathscr L}(r)$ 
are bounded when their respective
denominators are sufficiently large and 
%(in the number field case) 
when   $\frak p$ 
lies outside of a proper Zariski-closed subset, it suffices to prove
(\ref{Ga_ineq}) 
%or (\ref{Gb_ineq}) 
with a slightly smaller $\epsilon>0$
and with $\beta(\mathscr L,Y_i)$ replaced by slightly smaller
$\beta_i\in\mathbb Q$ for all $i$.   

Choose $\epsilon_1>0$, and positive integers $N$ and $b$ such that
\begin{equation}\label{aut_choices}
  \left( 1 + \frac nb \right) \max_{1\le i\le q}
      \frac{\beta_i N(h^0(X, \mathscr L^N)+\epsilon_1)}
        {\sum_{m\ge1} h^0(X, \mathscr L^N\otimes {\mathscr I}_{Y_i}^{m})}
    < 1 + \epsilon\;.
\end{equation}

Let
$$\Sigma
  = \biggl\{\sigma\subseteq \{1,\dots,q\}
    \bigm| \bigcap_{j\in \sigma} \Supp Y_j\ne\emptyset\biggr\}\;.$$
For $\sigma\in \Sigma$, let
$$\bigtriangleup_{\sigma}
  = \left\{\mathbf a = (a_i)\in \prod_{i\in\sigma}\beta_i^{-1}\mathbb N
    \Bigm| \sum_{i\in\sigma} \beta_ia_i = b \right\}\;.$$
For $\mathbf a\in\bigtriangleup_{\sigma}$ as above,
one defines 
%(see (\ref{def_I}), (\ref{def_filtr_h_0}), and (\ref{def_F}))
the ideal sheaf ${\mathscr I}_{\mathbf a}(x)$ of ${\mathscr O}_X$ by
\begin{equation}\label{def_I_of_x}
 {\mathscr I}_{\mathbf a}(x)
    = \sum_{\bf b}   \otimes_{i\in \sigma}{\mathscr I}_{Y_i}^{b_i}
\end{equation}
where the sum is taken for all ${\bf b}\in {\mathbb N}^{\#\sigma}$
with $\sum_{i\in \sigma} a_ib_i\ge bx$, i.e. ${\mathscr I}_{\mathbf a}(x)={\mathscr I}({\bf a}, bx)$ as in \eqref{def_I}.
Let
$${\mathcal F}(\sigma; {\bf a})_x
  = H^0(X, \mathscr L^N \otimes {{\mathscr I}_{\mathbf a}}(x))\;,$$
which we regard as a subspace of $H^0(X,\mathscr L^N)$   (see (\ref{def_filtr_h_0})), and let
$$F(\sigma; {\bf a})
    = \frac1{h^0(\mathscr L^N)}\int_0^{+\infty}(\dim{\mathcal F}(\sigma;{\bf a})_x)\,dx\;$$
     as in  \eqref{def_F}.
Applying Proposition \ref{aut2_thm_3_6} with the line sheaf being taken as $\mathscr L^N$, we have 
$$F(\sigma;\mathbf a)
  \ge \min_{1\leq i\leq q} \left(\frac{b}{\beta_i h^0(\mathscr L^N)}
    \sum_{m\ge 1} h^0(\mathscr L^N\otimes {\mathscr I}_{Y_i}^{m})\right)\;.$$

  As before, for any nonzero $s\in H^0(X,\mathscr L^N)$, we also define
\begin{equation}\label{def_mu_a}
  \mu_{\mathbf a}(s)
    = \sup\{x\in \mathbb R^{+} : s\in \mathcal F(\sigma; \mathbf a)_x\} \;.
\end{equation}

Let $\mathcal B_{\sigma; {\bf a}}$ be a basis of
$H^0(X, \mathscr L^N)$ adapted to the above filtration
$\{{\mathcal F}(\sigma; {\bf a})_x\}_{x\in\mathbb R^+}$.
By Remark \ref{remk_levin_aut},
$F(\sigma; {\bf a}) = \frac1{h^0(\mathscr L^N)}\sum_{s\in \mathcal B_{\sigma; {\bf a}}} \mu_{\mathbf a}(s)$.  Hence
\begin{equation}\label{sum_mu_geq}
 \sum_{s\in \mathcal B_{\sigma; {\bf a}}} \mu_{\mathbf a}(s)
    \ge \min_{1\leq i\leq q}
      \frac b{\beta_i} \sum_{m\ge 1} h^0(\mathscr L^N\otimes {\mathscr I}_{Y_i}^{m})\;.
\end{equation}
It is important to note that there are only finitely many ordered pairs
$(\sigma,\mathbf a)$ with $\sigma\in\Sigma$
and $\mathbf a\in\bigtriangleup_\sigma$.

Let $\sigma\in\Sigma$, $\mathbf a\in\bigtriangleup_\sigma$, and
$s\in H^0(X,\mathscr L^N)$ with $s\ne0$.
We note that  it suffices to use only
the leading terms in (\ref{def_I_of_x}).   The union of the sets of leading
terms as  $x$ ranges over the interval $[0,\mu_{\mathbf a}(s)]$ is finite,
and each such $\mathbf b$ occurs in the sum (\ref{def_I_of_x}) for
a closed set of $x$.  Therefore the supremum (\ref{def_mu_a}) is
actually a maximum. 
We have
\begin{align}\label{chooseK}
\mathscr L^N\otimes\mathscr I_{\mathbf a}(\mu_{\mathbf a}(s))
  = \sum_{\mathbf b\in K} \mathscr L^N  \otimes( \otimes_{i\in\sigma}{\mathscr I}_{Y_i}^{b_i})
  \;,
  \end{align}
where $K=K_{\sigma,\mathbf a,s}$ is the set of minimal elements of
$\{\mathbf b\in\mathbb N^{\#\sigma}
 \mid\sum_{i\in\sigma} a_ib_i\ge  \mu_{\mathbf a}(s)\}$
relative to the product partial ordering on $\mathbb N^{\#\sigma}$.
This set is finite.  Hence, using Lemma \ref{s}, we get
\begin{align}\label{lambdaK}
\lambda_{s,v}(\frak p)\ge \min_{\mathbf b\in K}\sum_{i\in\sigma}b_i\lambda_{Y_i,v}(\frak p)+C_v
\end{align}
for all $\frak p\in X(k)$.
 Write
$$\bigcup_{\sigma; {\bf a}}  \mathcal B_{\sigma; {\bf a}}    = \mathcal B_1\cup\cdots \cup \mathcal B_{T_1}=\{s_1, \dots, s_{T_2}\},$$
 where $\mathcal B_i= \mathcal B_{\sigma; {\bf a}} $ for some $\sigma\in\Sigma$
and $\mathbf a\in\bigtriangleup_\sigma$.
 
 For each $i=1,\dots, T_1$, let $J_i\subseteq\{1,\dots,T_2\}$ be the subset
such that $\mathcal B_i = \{s_j:j\in J_i\}$.  Choose Weil functions
$\lambda_D$, $\lambda_{\mathcal B_i}$ ($i=1,\dots,T_1$),
and $\lambda_{s_j}$ ($j=1,\dots,T_2$) for the divisors $D$, $(\mathcal B_i)$,
and $(s_j)$, respectively.  

\noindent{\bf Claim.}  For each $v\in S$,  
\begin{equation}\label{weilbase2}
  \begin{split}
  &\frac b{b+n} \left( \min_{1\leq i \leq q} \sum_{m\ge 1}
    \frac{h^0(\mathscr L^N\otimes {\mathscr I}_{Y_i}^{m})}{\beta_i} \right)
    \sum_{i=1}^q  \beta_i \lambda_{Y_i, \upsilon}(\frak p)  \\
  &\qquad\le \max_{1\le i\le T_1} \lambda_{\mathcal B_i,v} (\frak p)+ O_{\upsilon}(1)= \max_{1\le i\le T_1} \sum_{j\in J_i} \lambda_{s_j,v} (\frak p)+ O_{\upsilon}(1).
  \end{split}
\end{equation}
%by Proposition \ref{emptyweil} 

First of all, for $\frak p \in X(k)$ and $v\in S$
we want to pick $\sigma_{\frak p ,v}\in\Sigma$ such that
\begin{align}\label{emptyweil1}
\sum_{i=1}^q\beta_i\lambda_{Y_i,v} ( \frak p )\le \sum_{i\in \sigma_{ \frak p } ,v }\beta_i\lambda_{Y_i,v}( \frak p )+O_v(1).
\end{align} 
We can simply take $\sigma_{\frak p ,v}=\{1,\hdots,q\}$ if $\cap_{i=1}^q{\rm Supp}\, Y_i$ is not empty.
Therefore, it suffices to consider when $\cap_{i=1}^q{\rm Supp}\, Y_i=\emptyset$.
Note that for any index subset $I\subset\{1,\hdots,q\}$ such that 
$\cap_{i\in I} {\rm Supp} Y_i=\emptyset$,
$\min_{i\in I}\left\{\lambda_{Y_i,v}\right\}= \lambda_{\cap_{i\in I} Y_{i},v} $
is bounded by an $M_k$-constant.  
Suppose that
 $\lambda_{Y_{i_1},v} ( \frak p )\ge \lambda_{Y_{i_2},v} ( \frak p )\hdots \ge \lambda_{Y_{i_q},v} ( \frak p )$, where $\{i_1,\hdots,i_q\}=\{1,\hdots,q\}$.  Let    $\ell$ be the integer such that
and $\cap_{j=1}^{\ell}  Y_{i_j}$ is not empty and $\cap_{j=1}^{\ell+1}  Y_{i_j}=\emptyset$.
Then $\lambda_{Y_{i_t} ,v} ( \frak p )$, $\ell+1\le t\le q$, is bounded by an $M_k$-constant. 
Therefore, we may take $ \sigma_{ \frak p, v }=\{1,\hdots,\ell\}$.

For $i\in \sigma_{ \frak p,v}$, we let
\begin{align}\label{ti}
t_i:=\frac{\lambda_{Y_i,v}(\frak p )}{\sum_{j\in\sigma_{\frak p ,v}}\beta_j\lambda_{Y_j,v}(\frak p )}. 
\end{align}
Note that $\sum_{i\in \sigma_{\frak p ,v}}\beta_it_i=1$.  Choose  ${\bf a}_{\frak p ,v}=(a_{\frak p ,v;i})\in \bigtriangleup_{\sigma_{\frak p ,v}}$ such that
\begin{align}\label{abn}
 a_{\frak p ,v;i}\le (b+n)t_i\qquad\text{for all } i\in \sigma_{\frak p ,v}.
\end{align}
Then   \eqref{lambdaK}, \eqref{ti}, \eqref{abn}, the definition of $K$ and \eqref{emptyweil1}  imply 
\begin{align*}
\lambda_{s,v}(\frak p )&\ge \min_{\mathbf b\in K}\sum_{i\in\sigma_{\frak p ,v}}b_i\lambda_{Y_i,v}(\frak p )+O_v(1)\cr
&=\big( \sum_{j\in\sigma_{\frak p ,v}}\beta_j\lambda_{Y_j,v}(\frak p )\big) \min_{\mathbf b\in K}\sum_{i\in\sigma_{\frak p ,v}}b_i t_i+O_v(1)\cr
&\ge\big( \sum_{j\in\sigma_{\frak p ,v}}\beta_j\lambda_{Y_j,v}(\frak p )\big) \min_{\mathbf b\in K}\sum_{i\in\sigma_{\frak p ,v}} \frac{a_{\frak p ,v;i} b_i}{b+n}+O_v(1)\cr
&\ge \frac{\mu_{{\bf a}_{\frak p ,v}}(s)}{b+n}   \sum_{j\in\sigma_{ \frak p,v}}\beta_j\lambda_{Y_j,v}(\frak p ) +O_v(1)\cr
&\ge \frac{\mu_{{\bf a}_{\frak p ,v}}(s)}{b+n}   \sum_{i=1}^q\beta_j\lambda_{Y_j,v}(\frak p ) +O_v(1) .
\end{align*}
Then 
\begin{align*}
\sum_{s\in\mathcal B_{\frak p ,\sigma}; {\bf a}} \lambda_{s,v}(\frak p )&\ge
\frac{1}{b+n}(\sum_{s\in\mathcal B_{\frak p ,\sigma}; {\bf a}}  \mu_{{\bf a}_{\frak p ,v}}(s) ) 
  \sum_{i=1}^q\beta_j\lambda_{Y_j,v}(\frak p ) +O_v(1)\cr
&\ge \frac{b}{b+n} \big(\min_{1\leq i\leq q}
      \frac 1{\beta_i} \sum_{m\ge 1} h^0(\mathscr L^N\otimes {\mathscr I}_{Y_i}^{m}) \big) \sum_{i=1}^q\beta_j\lambda_{Y_j,v}(\frak p ) +O_v(1) 
      \end{align*}
by \eqref{sum_mu_geq}.
This proves the Claim.
 
Now,  by Theorem \ref{schmidt_base} with $\epsilon_1$ (as in \eqref{aut_choices}) in place of $\epsilon$,
there  is a proper Zariski-closed subset $Z$ of $X$ such that the inequality
\begin{equation}\label{schmidt_ineq}
  \sum_{v\in S} \max_J \sum_{j\in J} \lambda_{s_j,v}(\frak p )
    \le \left(h^0(\mathscr L^N) + \epsilon_1 \right) h_{\mathscr L^N}(\frak p ) + O(1)
\end{equation}
holds for all $\frak p\in X(k)$ outside of $Z$;
here the maximum is taken over all subsets $J$ of $\{1,\dots,T_2\}$
for which the sections $s_j$, $j\in J$, are linearly independent.

Combining (\ref{weilbase2}) and (\ref{schmidt_ineq}) gives
$$\sum_{i=1}^q \sum_{\upsilon\in S}  \beta_i \lambda_{Y_i, \upsilon}(\frak p )
  \leq \left( 1 + \frac nb \right) \max_{1\le i\le q}
    \frac{\beta_i (h^0(\mathscr L^N) + \epsilon_1)}
      {\sum_{m\ge1} h^0(\mathscr L^N\otimes {\mathscr I}_{Y_i}^{m})} h_{\mathscr L^N}(\frak p ) + O(1)$$
for all $\frak p\in X(k)$ outside of $Z$.  Here we used the fact that all of
the $J_i$ occur among the $J$ in (\ref{schmidt_ineq}).  Using
(\ref{aut_choices}) and the fact that $h_{\mathscr L^N}(\frak p )=Nh_{\mathscr L}(\frak p )$,
we have
\begin{equation*}
  \sum_{i=1}^q \sum_{\upsilon\in S} \beta_i \lambda_{Y_i, \upsilon}(\frak p )
    \leq \left( 1+\epsilon \right) h_{\mathscr L}(\frak p ) + O(1)
\end{equation*}
for all $\frak p \in X(k)$ outside of $Z$.
By the choices of $\beta_i$, this implies that 
 $$ \sum_{i=1}^q \beta(\mathscr L,Y_i) m_{Y_i,S}(\frak p)
    \leq (1+\epsilon) h_{\mathscr L}(\frak p ) + O(1)
$$
holds for all $k$-rational points $\frak p$  outside a proper Zariski-closed set.
\end{proof}

\end{document}